\newtheorem{theorem}{Theorem}[section]
\newtheorem{lemma}[theorem]{Lemma}
\newtheorem{corollary}[theorem]{Corollary}
\theoremstyle{definition}
\newcommand{\op}[1]{\textrm{\upshape #1}}
\newcommand{\join}{\vee}
\newcommand{\meet}{\wedge}
\newcommand{\la}{\langle}
\newcommand{\ra}{\rangle}
\newcommand{\alg}[1]{{\textbf{\upshape #1}}}  %
\newcommand{\vv}[1]{\mathsf {#1}}
\renewcommand{\d}{\delta}
\newcommand{\f}{\varphi}
\newcommand{\g}{\gamma}
\newcommand{\e}{\varepsilon}
\renewcommand{\th}{\theta}
\renewcommand{\o}{\omega}
\newcommand{\sse}{\subseteq}
\newcommand{\app}{\approx}
\newcommand{\VV}{{\mathbf V}}   
\newcommand{\ib}{\item[$\bullet$]}
\newcommand{\Con}[1]{\operatorname{Con}(\alg #1)}
\newcommand{\vuc}[2]{#1_1,\dots,#1_{#2}}
\newcommand{\imp}{\rightarrow}
\def\square{\RIfM@\bgroup\else$\bgroup\aftergroup$\fi
  \vcenter{\hrule\hbox{\vrule\@height.6em\kern.6em\vrule}\hrule}\egroup}
\newcommand{\smlcirc}{\raise3pt\hbox{\textrm{\circle{3.3}}}}
\newcommand{\myfrac}[2]{\dfrac{#1}{\lower.5ex\hbox{$#2$}}}
\mathchardef\hu="0362
\renewcommand{\e}{\varepsilon}
\newcommand{\lr}{ {\slash}}
\newcommand{\rr}{ {\backslash}}
\begin{document}

\title{Why most papers on filters are really trivial (including this one)}
\author{Paolo Aglian\`{o}\\
DIISM\\
Universit\`a di Siena\\
agliano@live.com
\date{}
}
\maketitle
\begin{abstract} The aim of this note is to show that many papers on various kinds of filters (and related concepts) in (subreducts of) residuated structures
are in fact easy consequences of more general results that have been known for a long time.
\end{abstract}

This paper is born out of frustration. As most of my colleagues I am often asked to review papers submitted to journals in fuzzy logic or abstract algebraic logic; and a large majority of them
deals with some kind of particular {\em filters} on some particular structure. Of course we all know that (usually) these papers are very weak and mostly useless but they keep appearing,
cluttering the field and forcing good people (who would love to do otherwise) to read them at least once and spend some precious time in writing a rejection note (not to mention the Editors who have to deal
with this disgrace on a daily basis).  This of course is far from being news; already ten years ago a very amusing paper was published on the subject \cite{Vita2014} and the description of the phenomenon was so good that
we must (shamelessly) borrow it.

\begin{quote}\em ''We do not want to increase the amount of papers about particular, artificial types of filters. We want to illuminate the
triviality of the theory behind these papers. Proofs of presented general claims are short and clear in contrast to proofs
of particular results for concrete special types of filters which are technical and they seem like ``math exercises''. We also
want to provide a tool for reviewers who battle with dozens of papers dealing with unmotivated types of filters.'' \end{quote}

In spite of the author's intentions the situation now is worse than 10 years ago; not only the number of papers about filters has increased, introducing more and more preposterous definitions, but this
craziness has spilled over the boundary of residuated lattices, involving subreducts or other kinds of derived structures. Just a clarification; we do not mean that every paper dealing with ``interesting subsets'' of (subreducts of) residuated structures is trivial. However we believe that most of them lack a real mathematical motivation and a good chunk in the majority of them  consists of straightforward corollaries of a general theory that has been available (on respectable journals) for almost 30 years.   In conclusion an update is due; we have chosen to treat the argument
in the very general setting of universal algebra, in which a substantial theory of filters (or ideals) is already available.

We stress that in this paper we will not produce any new mathematics; our aim is rather the opposite, i.e. to show that some ``new" mathematics is not new at all.

\section{What is an ideal?}

Given an algebra $\alg A$ an ideal of $A$ is an ``interesting subset'' of the universe $A$ that may or may not be a subalgebra of $\alg A$; an example of the first kind is a normal subgroup of the group and of the
second kind is a (two-sided) ideal of a ring. Now defining what ``interesting'' means is largely a matter of taste; however there is a large consensus among the practitioners of the field that:
\begin{enumerate}
\ib an ideal must have a simple algebraic definition;
\ib  ideals must  be closed under arbitrary intersections, so that a closure operator can be defined in which the ideals are exactly the
closed sets;  this gives raise to an algebraic lattice  whose elements are exactly the ideals;
\ib ideals must convey meaningful information on the structure of the algebra.
\end{enumerate}
The three points above are all satisfied by classical ideals on lattices and of course by ideals on a set $X$. We have however to be careful here; an ideal on a set $X$ is an ideal (in the lattice sense) on the Boolean algebra of subsets of $X$. There also a significant difference between ideals on lattices and ideals on Boolean algebras; in Boolean algebras an ideal is always the $0$-class  of a suitable congruence of the algebra (really, of exactly one congruence), while this is not true in general for lattices. As a matter of fact, identifying the class of (lower bounded) lattices in which every ideal is the $0$-class of a congruence is a difficult problem which is still unsolved, up to our knowledge. Of course the same property is shared by normal subgroups of a group and (two-sided) ideals of a ring (since they are both congruence kernels).

The problem of connecting ideals of general algebras to congruence classes has been foreshadowed in \cite{Fichtner1970} but really tackled by A. Ursini in his seminal paper \cite{Ursini1972}. Later, from the late 1980's to the late 1990's, A. Ursini and the author published a long series of papers on the subject (see for instance \cite{OSV4} and the bibliography therein); the theory developed in those paper will constitute the basis of our investigation.

\section{Ideals in universal algebra}\label{ideals}

We postulated that an ideal must have a simple algebraic definition; as imprecise as this concept might be, in our context there is a natural path to follow. Given a type (a.k.a. a signature) $\sigma$  we can consider the {\bf $\sigma$-terms}  (i.e. the elements of $\alg T_\sigma(\o)$, the absolutely free countably generated algebra OF type $\sigma$; a term is denoted by $p(\vuc xn)$ to emphasize the variable involved and we will use the vector notation $\vec x$ for $\vuc xn$.
Let $\Gamma$ be a set of $\sigma$-terms; we will divide the (finite) set of variables $\vuc z{n+m}$ of each terms in two subsets $\{\vuc xn\}$ and $\{\vuc ym\}$ so that every term in $\Gamma$ can be expressed as $p(\vec x,\vec y)$ and we allow $n=0$, while $m$ must always be at least $1$. If $\alg A$ has type $\sigma$ a $\Gamma$-ideal of $\alg A$ is an $I \sse A$ such that for any $\vuc an \in A$ and $\vuc bm \in I$, $p(\vec a,\vec b) \in I$.
The following is a simple exercise.

\begin{lemma} Let $\sigma$ be any type,  $\Gamma$ a set of $\sigma$-terms and $\alg A$ an algebra of type $\sigma$. Then
\begin{enumerate}
 \item the $\Gamma$-ideals of $\alg A$ are closed under arbitrary intersections;
 \item the $\Gamma$-ideal generated by $X \sse A$, i.e. the intersection of all the $\Gamma$-ideals containing $X$, is
$$
\op{Id}_\alg A^\Gamma(X) = \{p(\vec a,\vec b): \vec a \in A, \vec b \in X\};
$$
\item  the $\Gamma$-ideals of $\alg A$ form an algebraic lattice $\op{Id}^\Gamma(\alg A)$.
\end{enumerate}
\end{lemma}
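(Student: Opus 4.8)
The plan is to dispatch (1) and (3) by soft closure-system arguments, reserving the only real content for (2). Throughout, the key structural fact I would lean on is that each $p \in \Gamma$ is a term and so depends on finitely many variables. For (1), let $\{I_k\}_{k \in K}$ be $\Gamma$-ideals and $I = \bigcap_k I_k$; given $p(\vec x,\vec y) \in \Gamma$, $\vec a \in A^n$ and $\vec b \in I^m$, every $b_j$ lies in each $I_k$, so $p(\vec a,\vec b) \in I_k$ for all $k$ and hence $p(\vec a,\vec b) \in I$. Since $A$ is itself a $\Gamma$-ideal (handling the empty intersection), the $\Gamma$-ideals form a closure system, so $\op{Id}^{\Gamma}(\alg A)$ is a complete lattice ordered by inclusion.

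For (2) I would first make explicit the feature of $\Gamma$ on which the one-step formula rests: $\Gamma$ contains the ideal-variable projections and is closed under composition in the ideal variables (substituting members of $\Gamma$ for the $\vec y$ of a member of $\Gamma$ again yields a member of $\Gamma$). These are the defining conditions of a set of ideal terms, and without them the right-hand side need neither contain $X$ nor be closed. Granting them, set $R = \{p(\vec a,\vec b) : p \in \Gamma,\ \vec a \in A,\ \vec b \in X\}$. Then $X \sse R$ by taking $p$ to be a projection $y_i$. To see $R$ is a $\Gamma$-ideal, pick $q \in \Gamma$, parameters $\vec c \in A$, and $\vec d \in R^{m'}$; writing each $d_j = p_j(\vec a_j,\vec b_j)$ with $p_j \in \Gamma$ and $\vec b_j \in X$, the element $q(\vec c,\vec d)$ is the composite $q(\vec c, p_1(\vec a_1,\vec b_1),\dots)$ evaluated on parameters from $A$ and ideal arguments from $X$, which by closure under composition equals $r(\vec a^{\ast},\vec b^{\ast})$ for some $r \in \Gamma$ and $\vec b^{\ast} \in X$, hence lies in $R$. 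Conversely, any $\Gamma$-ideal $J \supseteq X$ contains each $p(\vec a,\vec b)$ with $\vec b \in X \sse J$, so $R \sse J$ and therefore $R = \op{Id}^{\Gamma}_{\alg A}(X)$. The one genuine obstacle is this composition step --- checking that an application of a $\Gamma$-term to $\Gamma$-terms collapses to a single application --- and it is exactly here that the closure hypothesis on $\Gamma$ does all the work; the rest is bookkeeping.

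Finally, (3) upgrades ``complete'' to ``algebraic.'' It suffices to show the closure operator $X \mapsto \op{Id}^{\Gamma}_{\alg A}(X)$ is finitary, equivalently that $\Gamma$-ideals are closed under up-directed unions. If $\{I_k\}$ is up-directed with union $I$, then for $p \in \Gamma$, $\vec a \in A$ and $\vec b \in I^m$ the tuple $\vec b$ is finite, so directedness puts all its entries in one $I_k$, whence $p(\vec a,\vec b) \in I_k \sse I$; this is again where finiteness of the arity of $p$ is used. One reads the same fact off (2): each generator of $\op{Id}^{\Gamma}_{\alg A}(X)$ involves only finitely many elements of $X$, so $\op{Id}^{\Gamma}_{\alg A}(X) = \bigcup\{\op{Id}^{\Gamma}_{\alg A}(F) : F \sse X \text{ finite}\}$. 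Hence the closure system is algebraic, $\op{Id}^{\Gamma}(\alg A)$ is an algebraic lattice, and its compact elements are exactly the finitely generated $\Gamma$-ideals.
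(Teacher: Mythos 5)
Your proposal is correct, but there is nothing in the paper to compare it against line by line: the author dismisses the whole lemma with ``The following is a simple exercise'' and gives no proof. Measured against that, your write-up is not only a complete proof but an improvement on the statement itself. Parts (1) and (3) are handled exactly as one should (intersections straight from the definition, with $A$ covering the empty intersection; algebraicity from closure under up-directed unions, which uses precisely the finiteness of the arity of each $p \in \Gamma$, or equivalently from the finitary character of the generation formula). The substantive point is your treatment of (2): you are right that for an \emph{arbitrary} set $\Gamma$ of $\sigma$-terms the displayed one-step formula is false. A concrete witness: take $\sigma$ with one binary operation, $\Gamma = \{y \cdot y\}$, $\alg A$ the absolutely free algebra on one generator $x$, and $X = \{x\}$; then the right-hand side is $\{x \cdot x\}$, which neither contains $X$ nor is a $\Gamma$-ideal, whereas the generated $\Gamma$-ideal is $\{x, x^2, x^4, x^8, \dots\}$. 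Your added hypotheses --- $\Gamma$ contains the ideal-variable projections and is closed under composition in the ideal variables --- are exactly what the collapse-to-one-application step needs, and under them your argument is the classical Gumm--Ursini one. One correction of emphasis: these closure properties are not the ``defining conditions'' of ideal terms (an ideal term is defined semantically by $\vv V \vDash p(\vec x, 0, \dots, 0) \approx 0$); rather, they are properties automatically enjoyed by the full set $ID_{\vv V,0}$ of all $\vv V,0$-ideal terms --- a projection $y_i$ is an ideal term, and substituting ideal terms for the ideal variables of an ideal term yields an ideal term --- and that is the only $\Gamma$ the paper ever actually uses. So, read charitably, the lemma is true in its intended application, and your proposal both proves it there and correctly delimits its scope: (1) and (3) hold for arbitrary $\Gamma$, while (2) genuinely requires the closure hypotheses you supplied.
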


At this level of generality we cannot say much more; if the type however contains a constant we can get a more focused definition. Let $\vv V$ be a variety whose type contains a constant which will denote by $0$; a
{\bf $\vv V,0$-ideal term in $\vuc ym$} is a term $p(\vec x,\vec y)$ such that
$$
\vv V \vDash p(\vec x,0,\dots,0) \app 0.
$$
Let $ID_{\vv V,0}$ be the set of all $\vv V,0$-ideal terms in $\vv V$; a {\bf $\vv V,0$-ideal}  $I$ of $\alg A \in \vv V$ is a $ID_{\vv V,0}$-ideal of $\alg A$.  If $\vv V = \VV(\alg A)$ we will simply say that $F$ is a {\bf $0$-ideal} of $\alg A$. As before the set $\op{Id}_{\vv V,0}(\alg A)$ of $\vv V,0$-ideals of $\alg A$ and the set $\op{Id}_0(\alg A)$ of $0$-ideals of $\alg A$ are algebraic lattices and $\op{Id}_0(\alg A) \sse \op{Id}_{\vv V,0}(\alg A)$ (and the inclusion may be strict). It is also evident that for any $\th \in \Con A$, $0/\th$ is a $\vv V,0$-ideal of $\alg A$: if $p(\vec x,\vec y) \in ID_{\vv V,0}$, $\vec a \in A$ and $\vec b \in 0/\th$ then
$$
p(\vec a,\vec b)\mathrel{\th} p(\vec a,\vec 1) =0.
$$
We say that $\vv V$ has {\bf normal $\vv V,0$-ideals} if for all $\alg A \in \vv V$ for all $I \in \op{Id}_{\vv V,0}(\alg A)$ there is a $\th \in \Con A$ with $I = 0/\th$. If $\vv V$ has normal $\vv V,0$-ideals then
of course $\op{Id}_0(\alg A) = \op{Id}_{\vv V,0}(\alg A)=\{0/\th: \th \in \Con A\}$ so we can simply talk about $0$-ideals of $\alg A$ without specifying the variety. Observe that the variety of pointed (by $0$) sets has normal
$0$-ideals, so we can hardly expext any nice structural theorem for varieties with $0$-normal ideals. However something can be said and the interested reader can consult \cite{OSV2} for more information.

We say that $\vv V$ is {\bf $0$-subtractive} or simply {\bf subtractive} \cite{OSV1} if there is a binary term $s(x,y)$ in the type of $\vv V$ such that
$$
\vv V\vDash s(x,x) \app 0\qquad\vv V\vDash s(x,0) \app x.
$$

\begin{theorem} For a variety $\vv V$ the following are equivalent:
\begin{enumerate}
\item $\vv V$ is subtractive;
\item for every $\alg A \in \vv V$ and $\th,\f \in \Con A$, $0/\th \join \f = 0/\th \circ \f$ (hence the congruences permute at $0$);
\item for every $\alg A \in \vv V$ the mapping $\th \longrightarrow 0/\th$ is complete and onto lattice homomorphism from $\Con A$ to $\op{Id}_{\vv V,0}(\alg A)$.
\end{enumerate}
\end{theorem}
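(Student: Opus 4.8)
The plan is to treat $(1)\Leftrightarrow(2)$ as the computational core and then read $(3)$ off the same construction. For $(1)\Rightarrow(2)$, the inclusion $0/(\theta\circ\varphi)\sse0/(\theta\join\varphi)$ is free since $\theta\circ\varphi\sse\theta\join\varphi$; the point is the reverse. First I would prove a \emph{permutation at $0$} lemma using $s$: if $0\mathrel{\theta}c\mathrel{\varphi}a$, then with $d:=s(a,c)$ the laws $s(x,x)\app0$, $s(x,0)\app x$ give $0=s(a,a)\mathrel{\varphi}s(a,c)=d$ and $d=s(a,c)\mathrel{\theta}s(a,0)=a$, so $0\mathrel{\varphi}d\mathrel{\theta}a$; symmetrically one gets the reverse, so $0/(\theta\circ\varphi)=0/(\varphi\circ\theta)$. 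Combined with transitivity of $\theta,\varphi$, this shows $0/(\theta\circ\varphi)$ is closed under appending a further $\theta$- or $\varphi$-step to a chain issuing from $0$ (the $\varphi$-step is transitivity of $\varphi$; the $\theta$-step follows by flipping a $\theta\circ\varphi$ witness to a $\varphi\circ\theta$ one via the lemma, extending by transitivity of $\theta$, and flipping back). Induction on the length of a chain witnessing $a\in0/(\theta\join\varphi)$ then collapses it to a single $\theta\circ\varphi$ step; permutation at $0$ follows since $\theta\join\varphi=\varphi\join\theta$.

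For $(2)\Rightarrow(1)$ I would work in $\mathbf F:=\mathbf F_{\vv V}(x,y)$ with $\alpha:=\op{Cg}(x,y)$ and $\beta:=\op{Cg}(y,0)$. Since $x\mathrel{\alpha}y\mathrel{\beta}0$ we have $x\in0/(\alpha\join\beta)$, so by $(2)$ there is $d$ with $0\mathrel{\alpha}d\mathrel{\beta}x$, and such a $d$ is the value of a binary term $s(x,y)$. Reading $0\mathrel{\alpha}s(x,y)$ in $\mathbf F/\alpha\cong\mathbf F_{\vv V}(x)$ (where $y$ is identified with $x$) gives $\vv V\vDash s(x,x)\app0$, and reading $s(x,y)\mathrel{\beta}x$ in $\mathbf F/\beta\cong\mathbf F_{\vv V}(x)$ (where $y$ is identified with $0$) gives $\vv V\vDash s(x,0)\app x$; thus $s$ witnesses subtractivity.

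For $(2)\Leftrightarrow(3)$, complete meet-preservation is automatic, as $\theta\meet\varphi=\theta\cap\varphi$ forces $0/(\theta\meet\varphi)=0/\theta\cap0/\varphi$, and algebraicity of both lattices (the Lemma) reduces the join side to finite joins. The two substantive claims are join-preservation and surjectivity. For join-preservation I would first record the subtractive description of the ideal join, $(0/\theta)\join_{\op{Id}}(0/\varphi)=\{c:\ s(c,j)\in0/\theta\ \text{for some}\ j\in0/\varphi\}$; then, given $c\in0/(\theta\join\varphi)$, which by $(2)$ equals $0/(\varphi\circ\theta)$, pick $d$ with $0\mathrel{\varphi}d\mathrel{\theta}c$, so that $j:=d\in0/\varphi$ and $s(c,d)\mathrel{\theta}s(d,d)=0$ place $c$ in the displayed set. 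Surjectivity is the assertion that $\vv V$ has normal $\vv V,0$-ideals, i.e. $0/\theta_I=I$ for $\theta_I:=\op{Cg}(\{(b,0):b\in I\})$, where $I\sse0/\theta_I$ is immediate. Conversely $(3)\Rightarrow(1)$ comes from evaluating join-preservation (or surjectivity) in $\mathbf F_{\vv V}(x,y)$ and extracting $s$ just as in $(2)\Rightarrow(1)$.

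The main obstacle is this surjectivity: that subtractivity forces $0/\theta_I\sse I$, so that every $\vv V,0$-ideal is genuinely a congruence class. The natural candidate congruence is $R:=\{(a,b):s(a,b)\in I\ \text{and}\ s(b,a)\in I\}$, which contains every generator $(b,0)$ (since $s(b,0)=b\in I$ and $s(0,b)\in I$ because $s(0,y)$ is a $\vv V,0$-ideal term) and whose $0$-class is exactly $I$ (using $a\in I\Rightarrow s(0,a)\in I$). The difficulty is that $s$ has no term inverse, so $R$ need not be transitive or compatible as it stands; the real work is to show $\theta_I\sse R$ --- equivalently that $R$, or a suitable refinement, is a congruence --- by straightening the Maltsev chains that define $\theta_I$, and this is precisely the step that must use the closure of $I$ under \emph{all} $\vv V,0$-ideal terms, not merely under $s$. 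By comparison $(1)\Leftrightarrow(2)$ is routine chain manipulation; the genuine content of the theorem sits in this ideal-theoretic surjectivity.
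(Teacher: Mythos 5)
The paper offers no proof of this theorem at all --- it refers the reader to Theorem 2.4 of \cite{AglianoUrsini1992} --- so your proposal has to stand on its own. Your $(1)\Leftrightarrow(2)$ is correct and is the standard argument (the flip lemma via $s$, closure under appending steps, induction on chain length, and the free-algebra extraction of $s$), and your $(3)\Rightarrow(1)$ sketch is workable: writing $x=t(\vec a,\vec b',\vec b'')$ with $t$ an ideal term, $\vec b'\in 0/\alpha$, $\vec b''\in 0/\beta$, the element $d:=t(\vec a,\vec b',\vec 0)$ satisfies $0\mathrel{\alpha}d\mathrel{\beta}x$, and one concludes as in your $(2)\Rightarrow(1)$.

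The genuine gap is in $(2)\Rightarrow(3)$: you leave surjectivity (normality of ideals) unproved --- you yourself call it ``the real work'' --- and your join-preservation argument silently depends on the same missing fact, because the ``recorded'' description $I\join J=\{c: s(c,j)\in I\ \text{for some}\ j\in J\}$ is only legitimate once one knows that every ideal $L$ is closed under un-subtraction: $j\in L$ and $s(c,j)\in L$ imply $c\in L$. So neither half of $(2)\Rightarrow(3)$ is actually established. Moreover your diagnosis points in the wrong direction: the crux is not making the relation $R$ (or a refinement) into a congruence by some elaborate straightening that ``must use all ideal terms, not merely $s$''. The missing device is a one-line computation with $s$ alone: the term $T(z,u,w):=s\bigl(z,s(s(z,u),w)\bigr)$ satisfies $T(z,0,0)\approx s(z,s(s(z,0),0))\approx s(z,z)\approx 0$, so it is an ideal term, and $T(a,b,s(a,b))=s\bigl(a,s(s(a,b),s(a,b))\bigr)=s(a,0)=a$; hence $b\in I$ and $s(a,b)\in I$ force $a\in I$ for every ideal $I$. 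This single lemma already validates your join description. For surjectivity one adds a second observation: for any polynomial $p(y,\vec x)$ the terms $s(p(y,\vec x),p(0,\vec x))$ and $s(p(0,\vec x),p(y,\vec x))$ vanish at $y=0$, hence are ideal terms, so every Maltsev step of $\op{Cg}(\{0\}\times I)$ --- a pair $p(c),p(d)$ with $\{c,d\}=\{0,e\}$, $e\in I$ --- has both $s$-differences in $I$. Now induct backwards along a chain $a=z_0,\dots,z_n=0$ witnessing $(a,0)\in\op{Cg}(\{0\}\times I)$: we have $z_n=0\in I$, and $z_i\in I$ together with $s(z_{i-1},z_i)\in I$ gives $z_{i-1}\in I$ by the lemma; hence $0/\op{Cg}(\{0\}\times I)=I$. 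Without $T$ (or an equivalent device) your plan stalls exactly at the point where you located the content of the theorem.
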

For the proof and for even more equivalent the reader can look at  Theorem 2.4 in \cite{AglianoUrsini1992}. As an immediate consequence we have

\begin{corollary}\label{cormain} Let $\vv V$ be a subtractive variety; then
\begin{enumerate}
\item for every $\alg A \in \vv V$, $\op{Id}_{\vv V,0}(\alg A)$ is a modular lattice;
\item $\vv V$ has normal ideals.
\end{enumerate}
\end{corollary}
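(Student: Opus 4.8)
The plan is to derive both claims directly from the characterization Theorem, treating the Corollary as advertised: an immediate consequence. Part (2) is the easiest. By clause (3) of the Theorem the assignment $\th \mapsto 0/\th$ is an \emph{onto} lattice homomorphism from $\Con A$ to $\op{Id}_{\vv V,0}(\alg A)$; surjectivity says precisely that every $\vv V,0$-ideal $I$ equals $0/\th$ for some $\th \in \Con A$, which is exactly the definition of $\vv V$ having normal ideals. So (2) requires no further work.

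For part (1) I would exploit the same homomorphism to transport the problem from the ideal lattice into $\Con A$. Fix $\alg A \in \vv V$ and three ideals; by normality (part (2), just established) write them as $0/\a$, $0/\b$, $0/\g$ with $\a,\b,\g \in \Con A$. Since the map is a lattice homomorphism, meets of ideals are computed as $0/(\a \wedge \b)$ (which is just $0/\a \cap 0/\b$) and joins as $0/(\a \vee \b)$. To check modularity I may assume $0/\a \sse 0/\g$; replacing $\a$ by $\a \wedge \g$ does not change the ideal $0/\a$ and lets me assume $\a \le \g$ in $\Con A$. The goal then reduces to the single equality $0/(\a \vee (\b \wedge \g)) = 0/((\a \vee \b) \wedge \g)$. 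One inclusion is free: the modular inequality $\a \vee (\b \wedge \g) \le (\a \vee \b) \wedge \g$ holds in every lattice, and $\th \le \f$ forces $0/\th \sse 0/\f$.

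The heart of the argument, and the only genuinely computational step, is the reverse inclusion, for which permutability at $0$ from clause (2) of the Theorem is exactly what is needed; this is the $0$-localized version of the classical proof that congruence permutability forces modularity of the congruence lattice. Given $a \in 0/((\a \vee \b) \wedge \g)$ we have $a \mathrel{\g} 0$ and, using the decomposition $0/(\a \vee \b) = 0/(\a \circ \b)$, an element $c$ with $a \mathrel{\a} c \mathrel{\b} 0$. From $\a \le \g$ we get $a \mathrel{\g} c$, whence $c \mathrel{\g} 0$ by transitivity with $a \mathrel{\g} 0$; together with $c \mathrel{\b} 0$ this yields $c \mathrel{(\b \wedge \g)} 0$, so $a \mathrel{\a} c \mathrel{(\b \wedge \g)} 0$ exhibits $(a,0) \in \a \circ (\b \wedge \g) \sse \a \vee (\b \wedge \g)$, i.e. $a \in 0/(\a \vee (\b \wedge \g))$. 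I expect the main obstacle to be purely bookkeeping, namely keeping the congruence operations in $\Con A$ cleanly separated from the lattice operations in $\op{Id}_{\vv V,0}(\alg A)$ and invoking permutability only at the point where $0/(\a \vee \b)$ is replaced by $0/(\a \circ \b)$, rather than anything structurally deep, since the permutable-implies-modular template is being faithfully localized at the class of $0$.
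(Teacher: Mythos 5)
Your proposal is correct and takes the route the paper itself intends: the paper offers no proof at all, stating the corollary as an ``immediate consequence'' of the preceding theorem, and your derivation---part (2) read off from the surjectivity in clause (3), and part (1) obtained by transporting the modular law through the onto lattice homomorphism $\th \longmapsto 0/\th$ and then invoking permutability at $0$ from clause (2)---is precisely the argument being left to the reader. Your localized permutable-implies-modular computation is sound (note that one cannot shortcut it by claiming $\operatorname{Con}(\alg A)$ itself is modular, since subtractive varieties need not be congruence modular), so there is nothing to fix.
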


Let $\vv V$ be subtractive, $\alg A \in \vv V$ and $I \in \op{Id}_0(\alg A)$; let
$$
I^d = \bigwedge\{\th \in \Con A: 0/\th=I\}\qquad I^\e = \bigvee\{\th \in \Con A: 0/\th=I\}.
$$
Then the interval $[I^\d,I^\e]$ in $\Con A$  consists of all $\th \in \Con A$ such that $I=0/\th$. The properties of the mapping $I \longmapsto I^\e$ have been investigated at length in \cite{OSV3} and it turns out (perhaps non surprisingly) that they are connected to abstract algebraic logic.  However in this paper we do not need to deal with such intricacies; we simply need to consider the case in which the mapping is good enough to allow a   connection between ideals and congruences. A subtractive variety $\vv V$ is  {\bf (finitely) congruential} \cite{OSV3} if there are binary terms $\vuc dn$ of $\vv V$ such that $\vv V \vDash d_i(x,x) \app 0$ for $i=1,\dots,n$ and
for all $\alg A \in \vv V$ and $I \in \op{Id}_0(\alg A)$
$$
I^\e = \{(a,b): d_i(a,b) \in I: i=1,\dots,n\}.
$$

\begin{theorem}\label{maincon} \cite{OSV3} For a subtractive variety $\vv V$ the following are equivalent:
\begin{enumerate}
\item $\vv V$ is finitely congruential witness $\vuc dn$;
\item the mapping $()^\e$ is continuous, i.e. for $\alg A \in \vv V$ and every family $(I_\g)_{\g \in \Gamma}$ of $0$-ideals of $\alg A$
$$
(\bigcup_{\g \in \Gamma} I_\g)^\e = \bigcup_{\g \in \Gamma} I^\e.
$$
\item there exist binary terms $\vuc dn$, an $n+3$-ary term $q$ of $\vv V$ and for each basic operation $f$ of arity $k$ and $i=1,\dots,n$  a (2+n)k-ary term $r_{i,f}$ such that
\begin{align*}
&\vv V \vDash d_(x,x) \app 0 \qquad i=1,\dots,n \\
&\vv V \vDash q(x,y,0,0,\dots,0) \app 0\\
&\vv V \vDash q(x,y,y,d_1(x,y),\dots,d_n(x,y)) \app x\\
&\vv V \vDash r_{i,f}(\vec x,\vec y, 0,\dots,0) \app 0\\
&\vv V \vDash r_{i,f}(\vec x,\vec y,d_1(x_1,y_1),\dots,d_1(x_k,y_k),\dots,d_n(x_1,y_1),\dots,d_n(x_k,y_k)).
\end{align*}
\end{enumerate}
\end{theorem}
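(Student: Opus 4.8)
The plan is to establish $(1)\Leftrightarrow(2)$ and $(1)\Leftrightarrow(3)$, so that all three become equivalent. Throughout I will use two facts about a subtractive variety. First, since $\th\mapsto 0/\th$ is a complete lattice homomorphism of $\Con A$ onto $\op{Id}_{\vv V,0}(\alg A)$, for any congruence $\th$ and any $I\in\op{Id}_0(\alg A)$ one has $\th\sse I^\e$ iff $0/\th\sse I$: the forward direction is clear, and if $0/\th\sse I$ then $\f=\th\join I^\e$ satisfies $0/\f=0/\th\join I=I$ in the ideal lattice, so $\f\sse I^\e$ as $I^\e$ is the largest congruence with $0$-class $I$, giving $\th\sse I^\e$. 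Second, putting $\alg F=\alg F_{\vv V}(x,y)$ and $J=0/\op{Cg}^{\alg F}(x,y)$, the set $J$ is exactly the set of binary terms $t$ with $\vv V\vDash t(x,x)\app 0$ (collapse $x,y$ to pass to $\alg F_{\vv V}(x)$), and I will use the canonical description: $(a,b)\in I^\e$ iff $t^{\alg A}(a,b)\in I$ for every $t\in J$. The nontrivial inclusion here reduces, via the first fact, to the standard formula $0/\op{Cg}^{\alg A}(a,b)=\op{Id}_0^{\alg A}(\{t^{\alg A}(a,b):t\in J\})$ for the $0$-class of a principal congruence in a subtractive variety.

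For $(3)\Rightarrow(1)$, fix $\alg A$ and $I\in\op{Id}_0(\alg A)$ and set $R=\{(a,b):d_i(a,b)\in I,\ i=1,\dots,n\}$. Reflexivity of $R$ is immediate from $\vv V\vDash d_i(x,x)\app 0$, and $I^\e\sse R$ from the easy half of the canonical description. The two equations for $r_{i,f}$ make $R$ compatible with every basic operation $f$: if $d_j(a_l,b_l)\in I$ for all $j,l$, then $d_i(f(\vec a),f(\vec b))=r_{i,f}(\vec a,\vec b,\dots)$ lies in $I$ because $r_{i,f}$ is a $\vv V,0$-ideal term in its last $nk$ variables. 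The term $q$ shows that $R$ ``respects'' $I$: if $(a,b)\in R$ and $b\in I$, then $a=q(a,b,b,d_1(a,b),\dots,d_n(a,b))\in I$, since $q$ is an ideal term in its last $n+1$ variables and all of $b,d_1(a,b),\dots,d_n(a,b)$ lie in $I$; taking $b=0$ gives $0/R=I$. After enlarging $\{d_i\}$ by the swapped terms $d_i(y,x)$ (which is routine and keeps the defining identities satisfiable) I may assume $R$ symmetric; then along any $R$-chain to $0$ the ``respects $I$'' property forces $0/\op{Cg}^{\alg A}(R)=I$, so $\op{Cg}^{\alg A}(R)$ is a congruence with $0$-class $I$ and hence $\op{Cg}^{\alg A}(R)\sse I^\e$. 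The sandwich $I^\e\sse R\sse\op{Cg}^{\alg A}(R)\sse I^\e$ yields $R=I^\e$, i.e. $(1)$.

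The implication $(1)\Rightarrow(2)$ is the easy direction and is exactly where finiteness of the witness set is used. Given a directed family $(I_\g)_\g$ with union $I$, an element $(a,b)$ lies in $I^\e=R$ iff the finitely many elements $d_1(a,b),\dots,d_n(a,b)$ all lie in $I=\bigcup_\g I_\g$; by directedness they lie in a single $I_\g$, so $(a,b)\in I_\g^\e$, and conversely. Monotonicity of $()^\e$ (the first fact) gives the reverse inclusion, so $(\bigcup_\g I_\g)^\e=\bigcup_\g I_\g^\e$. For $(1)\Rightarrow(3)$ I read the terms off from generated ideals in free algebras: working in $\alg F$ and the ideal $L=\op{Id}_0^{\alg F}(\{y,d_1(x,y),\dots,d_n(x,y)\})$, the hypothesis $R=L^\e$ gives $x\,L^\e\,y$ and $y\in L$, hence $x\in L$; the generated-ideal formula (the Lemma of Section~\ref{ideals}) then presents $x$ as an ideal term in those generators, which is precisely an identity of the required form for $q$. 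Similarly, in the free algebra on $\vec x,\vec y$ and $M=\op{Id}_0^{\alg F}(\{d_j(x_l,y_l)\})$, compatibility of $M^\e=R$ with the basic operation $f$ puts $d_i(f(\vec x),f(\vec y))\in M$, and the generated-ideal formula presents it as the term $r_{i,f}$.

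The core of the theorem is $(2)\Rightarrow(1)$, and this is where I expect the real work. By the canonical description it suffices to produce finitely many binary terms $d_1,\dots,d_n$ that generate $J$ as a $0$-ideal of $\alg F$: once $J=\op{Id}_0^{\alg F}(\{d_1,\dots,d_n\})$, every $t\in J$ expands (generated-ideal formula) into an ideal-term combination of the $d_i$, so ``$t^{\alg A}(a,b)\in I$ for all $t\in J$'' reduces to ``$d_i(a,b)\in I$ for all $i$'', giving $I^\e=R$. To obtain finite generation I write $J$ as the directed union of its finitely generated subideals $\op{Id}_0^{\alg F}(F_0)$; since $(x,y)\in\op{Cg}^{\alg F}(x,y)\sse J^\e$ (as $\op{Cg}^{\alg F}(x,y)$ has $0$-class $J$), continuity of $()^\e$ places $(x,y)$ in some $(\op{Id}_0^{\alg F}(F_0))^\e$, whence $\op{Cg}^{\alg F}(x,y)\sse(\op{Id}_0^{\alg F}(F_0))^\e$ and therefore $J=0/\op{Cg}^{\alg F}(x,y)\sse\op{Id}_0^{\alg F}(F_0)\sse J$; the finite set $F_0$ is the desired $\{d_i\}$. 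The delicate points, and the main obstacle, are exactly the two ingredients feeding this argument: establishing the canonical description of $()^\e$ (that is, the $0$-class-of-a-principal-congruence formula in subtractive varieties) and verifying that continuity applies to this directed family with $(x,y)$ genuinely in $J^\e$; everything after that is bookkeeping with the generated-ideal formula.
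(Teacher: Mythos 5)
Your directions $(1)\Rightarrow(2)$, $(1)\Rightarrow(3)$ and $(3)\Rightarrow(1)$ are essentially sound (your reading of (2) as a statement about directed families, and your silent repair of the paper's garbled last identity in (3) to $r_{i,f}(\vec x,\vec y,\dots)\app d_i(f(\vec x),f(\vec y))$, are both the intended ones), and your extraction of a finite generating set for $J=0/\op{Cg}^{\alg F}(x,y)$ from continuity is correct. The fatal gap is in $(2)\Rightarrow(1)$: the ``canonical description'' $I^\e=\{(a,b): t^{\alg A}(a,b)\in I\ \text{for all}\ t\in J\}$ --- equivalently, via your first fact, the ``standard formula'' $0/\op{Cg}^{\alg A}(a,b)=\op{Id}_0^{\alg A}(\{t^{\alg A}(a,b):t\in J\})$ --- is \emph{not} a theorem about subtractive varieties. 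It is exactly the assertion that $\vv V$ is congruential with witness set $J$, a property strictly stronger than subtractivity: it already contains the transitivity of the relation $R_J$ and its compatibility with the basic operations, which is precisely what the terms $q$ and $r_{i,f}$ of (3) have to deliver. Finite generation of $J$ cannot supply this, because compatibility involves \emph{parametrized} differences $t(x,y,\vec z)$ with $t(x,x,\vec z)\app 0$, and subtractivity alone does not place these in the ideal generated by the parameter-free ones. So what your argument proves is ``continuity plus congruentiality implies finitely congruential''; the passage from continuity alone to the compatibility terms --- the algebraic counterpart of Czelakowski's theorem that a logic is finitely equivalential iff the Leibniz operator is continuous --- is the substantial content of the theorem in \cite{OSV3} and is absent from the proposal. (For the record, the paper under review gives no proof to compare against; it quotes the statement from \cite{OSV3}.)

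The lemma you lean on is false, not merely unproven. Let $\vv W$ be the variety with a constant $0$ and one binary operation $\imp$, axiomatized by $x\imp x\app 0$ and $0\imp x\app x$; it is subtractive, witness $s(x,y):=y\imp x$. Let $\alg B$ be the algebra on $\{0,a,b,c,d,e,f\}$ with $u\imp u=0$, $0\imp u=u$, $a\imp c=e$, $b\imp d=f$, $e\imp f=a$, and $u\imp v=0$ in every remaining case; then $\alg B\in\vv W$ and $I=\{0\}$ is a $0$-ideal. The sets $\{0,a,b\}$ and $\{0,c,d\}$ are subuniverses on which the partitions $\{\{0\},\{a,b\}\}$ and $\{\{0\},\{c,d\}\}$ are congruences, so $t^{\alg B}(a,b)=t^{\alg B}(c,d)=0$ for every binary term $t$ with $\vv W\vDash t(x,x)\app 0$; hence both $(a,b)$ and $(c,d)$ lie in $R_J=\{(u,v): t^{\alg B}(u,v)\in I\ \text{for all}\ t\in J\}$. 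But they cannot both lie in $I^\e$: otherwise $(e,f)=(a\imp c,\,b\imp d)\in I^\e$, hence $(a,0)=(e\imp f,\,f\imp f)\in I^\e$, i.e. $a\in 0/I^\e=I$, a contradiction. So $R_J\supsetneq I^\e$. The principal-congruence formula fails just as concretely: $(e,0)=(a\imp c,\,b\imp c)\in\op{Cg}^{\alg B}(a,b)$, so $e\in 0/\op{Cg}^{\alg B}(a,b)$, while $\op{Id}_0^{\alg B}(\{t^{\alg B}(a,b):t\in J\})=\op{Id}_0^{\alg B}(\{0\})=\{0\}$. Consequently the canonical description cannot be ``established'' as a preliminary lemma; under hypothesis (2) it does become true, but only as a consequence of the theorem you are trying to prove, so a correct proof of $(2)\Rightarrow(1)$ must manufacture the terms $r_{i,f}$ (and transitivity) directly from continuity rather than presuppose them.
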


As a particular instance of being congruential we can to consider the case in which the interval $[I^\d,I^\e]$ degenerates to a point, i.e. $0/\th = 0/\f$ implies $\th=\f$. In this case it can be shown that the mapping
$I \longmapsto I^\e$ is in fact an isomorphism and that $I^\e$ is the unique $\th \in \Con A$ with $0/\th = I$.  In this case the variety $\vv V$ is called {\bf $0$-regular} and we have

\begin{corollary}\cite{GummUrsini1984}  For a pointed (at $0$) variety $\vv V$ the following are equivalent:
\begin{enumerate}
\item $\vv V$ is subtractive and $0$-regular;
\item $\vv V$ is subtractive and there is an $n \in \mathbb N$ and binary terms $\vuc dn$ such that
\begin{align*}
&\vv V\vDash d_i(x,x) \app 0\qquad\text{$i=1,\dots,n$}\\
&\vv V \vDash \{d_i(x,y) \app 0: i=1,\dots,n\} \Rightarrow  x \app y.
\end{align*}
\end{enumerate}
\end{corollary}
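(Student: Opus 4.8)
The plan is to prove the two implications separately, the workhorse being the isomorphism $\th\longmapsto 0/\th$ between $\Con A$ and $\op{Id}_{\vv V,0}(\alg A)$ that is available precisely because $\vv V$ is subtractive and $0$-regular. For $(2)\Rightarrow(1)$ subtractivity is already assumed, so only $0$-regularity needs checking: I would take $\th,\f\in\Con A$ with $0/\th=0/\f$ and show $\th\sse\f$, the reverse inclusion being symmetric. If $(a,b)\in\th$ then for each $i$ the law $d_i(x,x)\app 0$ gives $d_i(a,b)\mathrel\th d_i(b,b)=0$, so $d_i(a,b)\in 0/\th=0/\f$; reading this in $\alg A/\f\in\vv V$ yields $d_i(a/\f,b/\f)=0$ for $i=1,\dots,n$, whence $a/\f=b/\f$ by the quasi-identity, i.e.\ $(a,b)\in\f$. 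Thus $\th=\f$ and $\vv V$ is $0$-regular; this direction uses nothing beyond the definitions.

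For $(1)\Rightarrow(2)$ I would work in the free algebra $\alg F=\alg F_{\vv V}(x,y)$ and let $\th_0=\Th_{\alg F}(x,y)$ be the congruence collapsing the two free generators. Since $\alg F/\th_0\cong\alg F_{\vv V}(z)$ (with $x,y\mapsto z$), a binary term $t$ satisfies $t(x,y)\mathrel{\th_0}0$ exactly when $\vv V\vDash t(x,x)\app 0$, so $0/\th_0$ is the set of binary terms vanishing on the diagonal. Now subtractivity makes $\th\mapsto 0/\th$ a complete, onto lattice homomorphism onto the algebraic lattice $\op{Id}_{\vv V,0}(\alg F)$, and $0$-regularity makes it injective, hence a lattice isomorphism (this is the isomorphism $I\mapsto I^\e$ recorded just before the statement). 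A lattice isomorphism sends compact elements to compact elements, and the principal congruence $\th_0$ is compact in $\Con F$; hence $0/\th_0$ is a compact element of $\op{Id}_{\vv V,0}(\alg F)$, i.e.\ a finitely generated ideal, say generated by $d_1(x,y),\dots,d_n(x,y)$. Each $d_i$ lies in $0/\th_0$, so $\vv V\vDash d_i(x,x)\app 0$, which supplies the terms and the first family of identities in $(2)$.

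It remains to verify $\vv V\vDash\{d_i(x,y)\app 0:i=1,\dots,n\}\Rightarrow x\app y$. As the implication involves only $x,y$, it suffices to test it on $2$-generated members of $\vv V$, i.e.\ on quotients $\alg F/\psi$ with generators $\bar x,\bar y$. Assuming $d_i(\bar x,\bar y)=\bar 0$ for all $i$, i.e.\ $d_i(x,y)\in 0/\psi$, the $0$-class $0/\psi$ is a $\vv V,0$-ideal containing every $d_i(x,y)$ and therefore containing the ideal they generate, which is exactly $0/\th_0$. Being an order isomorphism, $\th\mapsto 0/\th$ reflects inclusions, so $0/\th_0\sse 0/\psi$ forces $\th_0\sse\psi$; hence $(x,y)\in\psi$ and $\bar x=\bar y$, as required.

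The one genuinely substantive point is the finiteness of $n$: it is obtained not by manipulating terms but by transporting compactness of the principal congruence $\th_0$ across the isomorphism into finite generation of the ideal $0/\th_0$. Everything else is bookkeeping with the generation formula for ideals and with the order-reflecting property of a lattice isomorphism, subtractivity entering only to make $\th\mapsto 0/\th$ a complete onto homomorphism that $0$-regularity then upgrades to an isomorphism.
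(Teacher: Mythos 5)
Your proof is correct, but there is nothing in the paper to compare it against: the corollary is stated with a citation to \cite{GummUrsini1984} and no proof is given. Measured against the classical argument in that source (which goes back to Fichtner), your route differs in one genuine respect. The classical proof of $(1)\Rightarrow(2)$ never leaves the congruence lattice: with $\alg F$ free on $x,y$ and $\th_0 = \Th_{\alg F}(x,y)$, one observes that the congruence $\psi$ generated by $\{(t,0) : t \in 0/\th_0\}$ has the same $0$-class as $\th_0$, so $0$-regularity alone forces $\psi = \th_0$; compactness of the principal congruence then extracts finitely many $d_1,\dots,d_n \in 0/\th_0$ with $(x,y) \in \Th_{\alg F}\bigl((d_1,0),\dots,(d_n,0)\bigr)$, and the quasi-identity follows by pushing this through an arbitrary homomorphism $\alg F \to \alg A$ sending $x,y$ to a pair $a,b$ with all $d_i(a,b)=0$, whose kernel must then contain $(x,y)$. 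That argument shows the term scheme in (2) characterizes $0$-regularity by itself---subtractivity is merely carried along on both sides of the equivalence. Your proof instead routes the same compactness through the isomorphism between $\Con F$ and $\op{Id}_{\vv V,0}(\alg F)$, which exists only because $\vv V$ is subtractive (the characterization of subtractive varieties recorded in Section~\ref{ideals}) and $0$-regular; so your argument is tied to the subtractive setting and would not yield Fichtner's sharper statement, but it stays entirely inside the ideal-theoretic machinery the paper sets up, and every step is sound: transport of compactness across a lattice isomorphism, the identification of compact elements of the algebraic lattice of ideals with finitely generated ideals (which rests on the finitary generation formula in the paper's first Lemma), and order-reflection to get $\th_0 \sse \psi$ in the final verification. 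Your $(2)\Rightarrow(1)$ is the standard argument and, as you note, needs nothing beyond the definitions.
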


A pointed variety which is subtractive and $0$-regular is called {\bf ideal determined} \cite{GummUrsini1984}; if $\vv V$ is such a variety then for any algebra $\alg A \in \vv V$, $\op{Id}_0(\alg A) \cong \Con A$ and
hence $\vv V$ is congruence modular by Corollary \ref{cormain}.  Clearly groups, rings, vector spaces, boolean algebras and many other classical algebras are ideal determined and so are residuated lattices, $\mathsf{FL}$-algebras and many of their subreducts. They are also congruence permutable in most cases; however for instance implication algebras are not congruence permutable \cite{Mitschke1971} but it is easy check that they are ideal determined.
There are also non ideal determined varieties to which Theorem \ref{maincon} applies, such as the variety of pseudocomplemented semilattices (\cite{OSV3}, Example 4.4).

Because of Theorem \ref{maincon} a finitely congruential subtractive variety $\vv V$ has two features that are the prototype of many papers on filters on residuated structures.
Let $T$ be a set of terms of $\vv V$; a $0$-ideal $I$ of $\alg A \in \vv V$ is {\bf $T$-special} if for all $\vec a \in A$, $t(\vec a) \in I$ for all $t \in T$.  It is obvious that the property of being $T$-special is upward hereditary on ideals; it is also obvious that the $T$-special ideals form an algebraic lattice.  Moreover let $\vv V_T$ be the subvariety of $\vv V$ axiomatized by the equations $t(\vec x) \app 0$ for $t \in T$.

\begin{lemma}\label{special} Let $\vv V$ be a finitely congruential subtractive variety, witness $\vuc dn$. Then for any  set $T$ of terms of $\vv V$ and $\alg A \in \vv V$, $\alg A/I^\e \in \vv V_T$ if and only if
$I$ is $T$-special.
\end{lemma}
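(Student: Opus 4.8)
The plan is to show that, once the definitions are unwound, both sides of the stated equivalence reduce verbatim to the single condition that $(t(\vec a),0) \in I^\e$ for every $t \in T$ and every tuple $\vec a \in A$; the entire content of the lemma then sits in one preliminary identity, namely $0/I^\e = I$.

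First I would pin down that identity. Since $I$ is a $0$-ideal of $\alg A$ we have $I \in \op{Id}_0(\alg A)$, so $I = 0/\psi$ for some $\psi \in \Con A$. Because $\vv V$ is subtractive, the interval description recorded above says that $\{\th \in \Con A : 0/\th = I\}$ is exactly $[I^\d,I^\e]$; as $I^\e$ is the top of this interval it lies in that set, whence $0/I^\e = I$. (Alternatively one can read this off the explicit description $I^\e = \{(a,b) : d_i(a,b) \in I,\ i=1,\dots,n\}$ supplied by finite congruentiality, but subtractivity alone already suffices here.)

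Next I would translate the left-hand side. The quotient $\alg A/I^\e$ automatically lies in $\vv V$, so $\alg A/I^\e \in \vv V_T$ holds precisely when $\alg A/I^\e \vDash t(\vec x) \app 0$ for each $t \in T$. Writing $\pi \colon \alg A \to \alg A/I^\e$ for the canonical surjection and using that terms commute with $\pi$, the value of $t$ at the tuple $(\pi a_1,\dots,\pi a_k)$ equals $\pi(t(\vec a))$; and since $\pi$ is onto, every tuple of the quotient arises this way. Hence $\alg A/I^\e \vDash t(\vec x)\app 0$ is equivalent to $\pi(t(\vec a)) = \pi(0)$ for all $\vec a$, i.e. to $(t(\vec a),0) \in I^\e$ for all $\vec a$.

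Finally I would translate the right-hand side and compare. By definition $I$ is $T$-special iff $t(\vec a) \in I$ for every $t \in T$ and every $\vec a \in A$; invoking $I = 0/I^\e$ from the first step, this is the same as $(t(\vec a),0) \in I^\e$. Ranging over $t \in T$, the two descriptions coincide, which is exactly the asserted equivalence. The only step that is not pure bookkeeping is the identity $0/I^\e = I$, and this is where subtractivity is genuinely used: it is precisely what lets the set-theoretic membership $t(\vec a) \in I$ and the congruence membership $(t(\vec a),0) \in I^\e$ be read as one and the same condition. I expect this to be the crux, while the remaining manipulations are routine.
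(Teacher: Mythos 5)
Your proof is correct, but it follows a genuinely different route from the paper's. The paper argues both directions through the explicit term description of $I^\e$ provided by finite congruentiality: for ``$T$-special implies $\alg A/I^\e \in \vv V_T$'' it notes that $d_i(y,0)$ is an ideal term (as $d_i(0,0)=0$), so $t(\vec a)\in I$ forces $d_i(t(\vec a),0)\in I$ and hence $(t(\vec a),0)\in I^\e$; for the converse it invokes the term $q$ of Theorem \ref{maincon}(3) to pull the element back into the ideal via $t(\vec a)=q(t(\vec a),0,0,d_1(t(\vec a),0),\dots,d_n(t(\vec a),0))\in I$. You instead collapse both directions into the single identity $0/I^\e = I$, which you correctly derive from subtractivity alone: by normality of ideals the set $\{\th \in \Con A : 0/\th = I\}$ is nonempty, and since $\th \mapsto 0/\th$ is a complete lattice homomorphism, its join $I^\e$ still has $0$-class $I$; after that, both sides of the equivalence literally become ``$(t(\vec a),0)\in I^\e$ for all $t \in T$ and $\vec a \in A$.'' What your route buys: it is shorter, and it reveals that finite congruentiality is not actually needed for this lemma --- the equivalence holds in any subtractive variety once $I^\e$ is read as the largest congruence whose $0$-class is $I$. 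What the paper's route buys: it is purely equational, exhibiting concretely how the witnessing terms $d_i$ and $q$ do the work, which matters for the intended applications where $T$-speciality is to be checked by term computations rather than by appeal to the abstract congruence lattice. One small caution: your parenthetical remark that $0/I^\e = I$ can alternatively be ``read off'' the explicit description of $I^\e$ is too quick --- the inclusion $0/I^\e \sse I$ from that description is exactly what requires the $q$-term argument --- but since your main argument does not rely on it, the proof stands.
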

\begin{proof} Suppose that $I$ is $T$ special; then for all $\vec a \in A$, $t(\vec a) \in I$ for all $t \in T$. As each $d_i(x,y)$ is an ideal term, we get that $d_i(t(a),0) \in I$ for $i=1,\dots,n$ and therefore
$(t(a),0) \in I^\e$ for all $t \in T$. This implies that $\alg A/I^\e \in \vv V_T$ as wished.

Conversely, suppose that $\alg A/I^e \in \vv T$; then for all $\vec a \in A$, $(t(\vec a),0) \in I^\e$ for all $t \in T$.  This implies that $d_1(t(\vec a),0),\dots,d_n(t(\vec a),0) \in I$ for all $t \in T$; by Theorem \ref{maincon}(3)
$$
t(\vec a) = q(t(\vec a),0,0,d_1(t(\vec a),0),\dots,d_n(t(\vec a),0)) \in I
$$
for al $t \in T$ and thus $I$ is $T$-special.
\end{proof}

So in a finitely congruential variety we have a potentially unlimited supply of $T$-special filters; if the variety is also ideal determined, then even more is true. Let $\vv V'$ be a subvariety of $\vv V$ and let $J$ be a set of equations axiomatizing $\vv V'$ relative to $\vv V$. If we set
$$
T= \{d_i(p,q): i =1,\dots,n\  p\app q \in J\}
$$
then $\vv V\vDash t(\vec x) \app 0$ for all $t \in T$ if and only if $\vv V \vDash p \app q$ for all $p \app q \in J$.  It follows that $\vv V'= \vv V_T$ and so any subvariety of $\vv V$ can be taken as the base for defining son $T$-special ideals.

The second consequence is the following; let $\vv V ^+$ be a pointed variety such a class of subreducts $\vv V$ of $\vv V^+$ happens to be a finitely congruential subtractive variety. Certainly $\vv V^+$ is subtractive as well;
if for any ``new" operation $f$ in the type of $\vv V^+$ we can find terms $r_{i,f}$ satisfying (2) of Theorem \ref{maincon}, then $\vv V^+$ is finitely congruential as well and the ideals in $\vv V^+$ are exactly the $\vv V$-ideals that are closed under all the $r_{i,f}$, where $f$ is a new operation. A particularly easy case is the one in which the new operation is itself a {\em pure} ideal term, i.e. $f(0,\dots,0) \app 0$.

\section{Variations on $\mathsf{FL}$-algebras}

A {\bf residuated lattice} is an algebra  $\alg A = \la A,\join,\meet,\cdot,\lr,\rr, 1\ra$ where
\begin{enumerate}
\item $\la A, \join, \meet\ra $ is a lattice;
\item $\la A, \cdot,1\ra$ is a monoid;
\item $\lr$ and $\rr$ are the left and right residua w.r.t. $\cdot$, i.e. $x \cdot y \leq z$ iff $y \leq x \rr z$ iff $x \leq z \lr y$.
\end{enumerate}
Residuated lattices form a variety $\mathsf{RL}$ and an axiomatization, together with the many equations holding in these very rich structures, can be found in \cite{BlountTsinakis2003}. A residuated lattice is {\bf integral} if satisfies $x \le 1$ and {\bf commutative} if the monoidal operation is commutative. An $\mathsf{FL}$-algebra is an algebra $\alg A = \la A,\join,\meet,\cdot,\lr,\rr,0, 1\ra$ where $\alg A = \la A,\join,\meet,\cdot,\lr,\rr, 1\ra$ is a residuated lattice and $0$ is a constant. An $\mathsf{FL}$-algebra is
\begin{enumerate}
\ib a $\mathsf{FL_{w}}$-algebra, if it is integral and satisfies $0\le x$,
\ib a $\mathsf{FL_e}$-algebra, if it is commutative,
\ib a $\mathsf{FL}_{ew}$-algebra, if it is both an $\mathsf{FL}_w$ algebra and an $\mathsf{FL}_e$-algebra.
\end{enumerate}

Residuated lattices are clearly ideal determined, hence so are $\mathsf{FL}$-algebras. Let $\alg A$ be a residuated lattice and let $A^+= \{a \in A: a \ge 1\}$; a  {\bf filter} of $\alg A$ is a subset $F \sse A$ such that
\begin{enumerate}
\item $A^+ \sse \in F$;
\item $a, a \lr b \in F$ implies $b \in $;
\item $a,b \in f$ implies $a \meet b \in F$.
\end{enumerate}
A filter $F$ is {\bf normal} if
\begin{enumerate}
\item[4] $a \in F$ and $b \in A$ implies $b \rr ab, ba\lr b \in F$.
\end{enumerate}
Clearly in any commutative residuated lattices (or $\mathsf{FL}_e$-algebra) every filter is a normal filter.
It is well-known that there is a one-to-one correspondence (which is in fact a lattice isomorphism) between the normal filters and the congruences of $\alg A$ given by the mutually inverse maps
$$
\th \longmapsto A^+/\th \qquad\qquad F \longmapsto \th_F= \{(a,b):  a \lr b,b\lr a \in F\}.
$$
Now if $\alg A$ is integral, then $A^+ = \{1\}$ and hence the normal filters are just the $1$-ideals; if $\alg A$ is not integral then the $1$-ideals are not filters but rather  the {\bf convex normal subalgebras} of $\alg A$.
However there is a straightforward way to connect convex normal subalgebras and normal filters in such a way that the results of Section \ref{ideals} can be transferred easily. We spare the technical details mainly because all the examples of ``papers of filters'' that we will introduce deal in fact with normal filters in integral residuated lattices (or $\mathsf{FL}_w$-algebras). Indeed in the majority of cases commutativity of the monoid operation is also present. We remind that subvarieties of $\mathsf{FL}_{ew}$ have been studied extensively in the literature; examples are the variety of $\mathsf{MV}$-algebras \cite{Panti1999}, $\mathsf{BL}$-algebras \cite{AglianoMontagna2003} and $\mathsf{MTL}$-algebras \cite{EstevaGodo2001}.

From our considerations it follows that any  theory of ``special" normal filters in integral residuated lattices (or $\mathsf{FL}_w$-algebras)  can be regarded as a special case of the theory of special ideals in pointed varieties. So if one wants to create to create a (bad) paper on filters of say, $\mathsf{BL}$ or $\mathsf{FL}_{ew}$-algebras all he  has to do is to find some term and come up with a fancy name for the filters that are $T$-special w.r.t. to those equations. Then he can  prove a bunch of results that are corollaries of the results in Section \ref{ideals}, but of course, without using the general theory, the proofs very often consist in long (and pointless) calculations.  These results have been classified in \cite{VitaCintula2011}, which is the ``serious'' counterpart of the more {\em tongue-in-cheesk}  \cite{Vita2014}; the list of these amenities is substantial and we believe there is no need to produce some more.

However we would like to point out that the same trick has been applied to varieties which consist of ideal determined subreducts of varieties of
$\mathsf{FL}$-algebras: the trick here is to take away some operation from the type of $\mathsf{FL}$-algebra,  still keeping ideal determinacy.
It is easily seen that if $\vv V$ is any variety of $\mathsf{FL}_w$-algebras, then any class of subreducts of $\vv V$
that contains  $\{\lr,\rr,\meet,1\}$ in its type is an ideal determined variety.

Again let's deal with the integral case to make things simpler, i.e. $\mathsf{FL_{w}}$-algebras.   {\bf Pseudo-$\mathsf{BL}$-algebras} are just the non commutative version of $\mathsf{BL}$-algebras; {\bf pseudohoops} have been introduced and defined via equations in \cite{GeorgescuLeusteanProtasa} but it is clear that they are just the $0$-less subreducts of pseudo-$\mathsf{BL}$-algebras and  of course pseudohoops and pseudo-$\mathsf{BL}$-algebras are ideal determined.   So for instance the entire \cite{AlaviBorzoeiKologani} is a more or less trivial consequence of \cite{KondoDudek}, which is a particular instance of \cite{VitaCintula2011}, which is a consequence of the general ideal theory in Section \ref{ideals}.
Now the reader can easily verify for instance that  the more recent  \cite{BorzoeiNamdarKologani}, \cite{Ciungu} and \cite{MotamedMoghaderi2019}  are just more of the same.

\section{Adding operations}

Another nice trick is to add new operations to $\mathsf{FL}$-algebras, but the axioms for those new operations are carefully chosen in such a way that the
{\em new} filters are just the {\em old} filters that are closed under those additional operations. Of course there is a very general way to do that (see \cite{Agliano1996b}); here we will briefly show how to do it for commutative and integral residuated lattices (and hence for $\mathsf{FL}_{ew}$-algebras).

Let $\alg A$ be a commutative and integral residuated lattice; a unary operation $h$ on $\alg A$ is {\bf normal} if for all $a,b \in A$
\begin{enumerate}
\ib $h(1)=1$;
\ib $h(a \imp b) \le h(a) \imp h(b)$.
\end{enumerate}
Observe that by the second point above any normal operation is increasing: $a \le b$ implies $f(a) \le f(b)$.  An $n$-ary operation $f$ on $A$ is {\bf normal} if for all $i\le n$ and for all $a_1,\dots,a_{i-1},a_{i+1},\dots,a_n \in A$
$$
f_i(x) := f(a_1,\dots,a_{i-1},x,a_{i+1},\dots,a_n)
$$
is normal. A {\bf commutative integral residuated lattice with normal operators} is an algebra $\alg A =\la A,\join,\meet,\imp,\cdot,1, f_\lambda\ra_{\lambda in \Lambda}$ such that $\la A,\join,\meet,\imp,\cdot,1\ra$ is a commutative and integral residuated lattice and $f_\lambda$ is normal for any $\lambda \in \Lambda$.

\begin{theorem} (see \cite{Agliano1996b}, Proposition 3.5)  If $\alg A =\la A,\join,\meet,\imp,\cdot,1, f_\lambda\ra_{\lambda in \Lambda}$ is a commutative integral residuated lattice with normal operators, then the $1$-ideals (which we may call the filters) of $\alg A$ are exactly the filters of the commutative residuated lattice reduct that are closed under $f_\lambda$ for any $\lambda \in \Lambda$.
\end{theorem}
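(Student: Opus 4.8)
The plan is to exploit the fact that the expansion $\alg A$ and its residuated‑lattice reduct $\alg A^{-}$ are \emph{both} ideal determined with respect to the same constant and the same witnessing terms, so that filters and $1$‑ideals can be compared through a single congruence formula; here the constant playing the role of $0$ in Section \ref{ideals} is of course $1$. Recall that for commutative integral residuated lattices the pair $d_1(x,y)=x\imp y$, $d_2(x,y)=y\imp x$ witnesses $1$‑regularity (since $d_i(x,x)\app 1$, while $d_1(x,y)\app 1$ and $d_2(x,y)\app 1$ together force $x\app y$) and $s(x,y)=y\imp x$ witnesses subtractivity ($s(x,x)\app 1$, $s(x,1)\app x$); the associated congruence of a filter $F$ is $\th_F=\{(a,b): a\imp b,\ b\imp a\in F\}$, with $1/\th_F=F$. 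The crucial observation is that $d_1,d_2,s$ all lie in the reduct, so the defining (quasi‑)identities of subtractivity and $1$‑regularity persist in the expansion; hence the expanded variety is again subtractive (indeed ideal determined), and by Corollary \ref{cormain} it has normal ideals, so the $1$‑ideals of $\alg A$ are exactly the classes $1/\th$ with $\th\in\Con A$. It then remains to identify these classes with the filters of $\alg A^{-}$ closed under the $f_\lambda$.

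First I would dispatch the easy inclusion. If $F$ is a $1$‑ideal of $\alg A$, write $F=1/\th$ with $\th\in\Con A$. Every congruence of the expansion is a fortiori a congruence of the reduct, so $F=1/\th$ is already a $1$‑ideal of $\alg A^{-}$, i.e. a filter of the residuated lattice. Closure under $f_\lambda$ is then immediate: normality gives $f_\lambda(\dots,1,\dots)\app 1$ in each coordinate, so if $a\in F$, i.e. $a\mathrel{\th}1$, then compatibility of $\th$ with $f_\lambda$ yields $f_\lambda(\dots,a,\dots)\mathrel{\th}f_\lambda(\dots,1,\dots)=1$, whence $f_\lambda(\dots,a,\dots)\in 1/\th=F$. (This is precisely closure under the pure ideal terms $f_\lambda(x_1,\dots,y,\dots,x_k)$ singled out by $f_\lambda$, which is the sense in which ``closed under $f_\lambda$'' should be read.)

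The substance of the theorem is the converse, and it is where I expect the only real work. Let $F$ be a filter of $\alg A^{-}$ closed under every $f_\lambda$; I would show that the reduct congruence $\th_F$ is in fact compatible with each new operation $f_\lambda$, so that $\th_F\in\Con A$ and therefore $F=1/\th_F$ is a $1$‑ideal of $\alg A$ (using the general fact, noted in Section \ref{ideals}, that $1/\th$ is always an ideal). Compatibility is proved one coordinate at a time: fixing all but the $j$‑th argument turns $f_\lambda$ into a unary normal map $g$, and for $a\mathrel{\th_F}b$ one must check $g(a)\imp g(b),\ g(b)\imp g(a)\in F$. Here the defining inequality of normality does the job: from $a\imp b\in F$ and closure of $F$ under $g$ we get $g(a\imp b)\in F$, and since $g(a\imp b)\le g(a)\imp g(b)$ while $F$ is upward closed, $g(a)\imp g(b)\in F$; the symmetric argument handles $g(b)\imp g(a)$. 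Running this along the chain of tuples that interpolates between $\vec a$ and $\vec b$ by switching a single coordinate at each step, and invoking transitivity of $\th_F$, gives $f_\lambda(\vec a)\mathrel{\th_F}f_\lambda(\vec b)$.

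The main obstacle is thus the single‑coordinate compatibility step, and it is worth stressing which hypotheses it consumes: the inequality $h(a\imp b)\le h(a)\imp h(b)$ (to pass from an element of $F$ to a larger one), the fact that a filter is upward closed (to absorb that passage), and coordinate‑wise closure under $f_\lambda$. This is exactly the ``particularly easy case'' flagged after Lemma \ref{special}, in which each added operation is a pure ideal term and the witnessing terms $r_{i,f_\lambda}$ of Theorem \ref{maincon}(3) may simply be taken to be $f_\lambda$ applied in the ideal slots; the normality inequality is what guarantees these choices work. Everything else is routine bookkeeping within the ideal theory of Section \ref{ideals}.
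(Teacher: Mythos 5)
Your proposal is correct, and it supplies exactly the argument the paper itself omits: the paper gives no proof of this theorem, deferring to Proposition~3.5 of \cite{Agliano1996b}, and your direct verification is the natural instantiation of the general ideal theory of Section~\ref{ideals} that the citation stands for. The two halves are sound: subtractivity (witness $s(x,y)=y\imp x$) persists in the expansion, so every $1$-ideal is a class $1/\th$ with $\th\in\Con A$, giving the forward inclusion; and for the converse, the coordinate-wise compatibility of $\th_F$ with each $f_\lambda$ — using $g(a\imp b)\in F$ (closure), $g(a\imp b)\le g(a)\imp g(b)$ (normality), and upward closure of $F$, then chaining through tuples by transitivity — is precisely the substance of the result. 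One point of yours deserves emphasis as a genuine contribution rather than bookkeeping: your parenthetical insistence that ``closed under $f_\lambda$'' must be read coordinate-wise (closure under all unary polynomials $f_\lambda(c_1,\dots,x,\dots,c_k)$, equivalently under the ideal terms with a single ideal variable) is not pedantry — with the weaker ``all arguments in $F$'' reading the statement is actually false (e.g.\ on a modal Boolean algebra the operator $f(x,y)=x\join\Box y$ is normal, and any filter is tuple-wise closed under it by upward closure alone, without being closed under $\Box$), and this reading is the one consistent with the paper's subsequent lemma on generated filters, which quantifies over $P(\alg A)$. Two cosmetic quibbles: the operations $f_\lambda$ are ``pure ideal terms'' in the paper's sense merely because $f_\lambda(1,\dots,1)\app 1$, whereas what your argument really uses is the stronger fact that $1$ in any single slot yields $1$; and your closing remark that the terms $r_{i,f_\lambda}$ of Theorem~\ref{maincon}(3) ``may simply be taken to be $f_\lambda$ applied in the ideal slots'' is loose — but neither affects the proof, since your compatibility argument establishes the needed congruence property directly without routing through Theorem~\ref{maincon}(3).
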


If $\alg A$ is as above, then we denote by $P(\alg A)$ the set of all unary polynomials of $\alg A$ involving only the normal operators.
If $X \sse A$, let $\op{Fil}_\alg A(X)$ be the filter generated by $X$ in $\alg A$.

\begin{lemma} (see Proposition 3.6 in \cite{Agliano1996b}) Let $\alg A$ be a commutative integral residuated lattice with normal
operators.
\begin{enumerate}
\item If $X \sse A$, then  $a \in \op{Fil}_\alg A (X)$ if and only if there are $\vuc bn
\in X$
and $\vuc pn \in \op{P}(\alg A)$ such that
$$
(p_1(b_1)\meet 1)\dots(p_n(b_n) \meet 1) \le a.
$$
\item  If $F,G$ are filters of $\alg A$,
$$
F \join G =\{c : ab \le c,\ \text{for some}\ a \in F,\ b \in G\}.
$$
\end{enumerate}
\end{lemma}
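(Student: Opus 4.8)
The plan is to prove both parts by the standard method for a generated filter: exhibit the set on the right as a filter of $\alg A$ that contains the generators, and then show that it is contained in every such filter. Two structural facts about the unary polynomials in $\op{P}(\alg A)$ will do the work. First, every $p \in \op{P}(\alg A)$ is itself normal: this holds for the identity and for each coordinate operation of an $f_\lambda$ by hypothesis, and normality is preserved under composition, since for normal $p,q$ one has $p(q(a\imp b)) \le p(q(a)\imp q(b)) \le p(q(a))\imp p(q(b))$, using that $p$ is increasing and satisfies $p(u\imp v)\le p(u)\imp p(v)$. Thus $\op{P}(\alg A)$ is closed under composition and $p(1)=1$, so by integrality $p(a)\le p(1)=1$ for every $a$; in particular every value of a polynomial in $\op{P}(\alg A)$ lies below $1$, and the terms $p_i(b_i)\meet 1$ in the statement are just $p_i(b_i)$. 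Second, I will use the multiplicative form of normality, $p(u)\,p(v)\le p(uv)$: from $v\le u\imp uv$ (residuation) and monotonicity, $p(v)\le p(u\imp uv)\le p(u)\imp p(uv)$, and residuating back gives $p(u)p(v)\le p(uv)$; induction yields $p(u_1)\cdots p(u_n)\le p(u_1\cdots u_n)$.

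For part (1), write $S$ for the set on the right. That $X\sse S$ and that $S$ is upward closed are immediate, and $1\in S$ via the empty product. Closure under $\meet$ and under residuated modus ponens ($c,\,c\imp c'\in S\Rightarrow c'\in S$) follows by concatenating the two defining products: for meets, both blocks are $\le 1$, so their product is $\le c$ and $\le c'$, hence $\le c\meet c'$; for modus ponens, if $P\le c$ and $Q\le c\imp c'$ are such products then $PQ\le c\,(c\imp c')\le c'$, and $PQ$ is again a product of $\op{P}(\alg A)$-images of generators. The crucial step is closure under $\op{P}(\alg A)$: given $\prod_i p_i(b_i)\le a$ and $p\in\op{P}(\alg A)$, monotonicity and the multiplicative inequality give $p(a)\ge p(\prod_i p_i(b_i))\ge \prod_i p(p_i(b_i))=\prod_i (p\circ p_i)(b_i)$, and each $p\circ p_i$ again lies in $\op{P}(\alg A)$, so $p(a)\in S$. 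By the preceding Theorem (reading closure under the $f_\lambda$ as closure under all the unary polynomials they induce, i.e. under every element of $\op{P}(\alg A)$) these properties say exactly that $S$ is a filter of $\alg A$ containing $X$, whence $\op{Fil}_\alg A(X)\sse S$. The reverse inclusion is routine: a filter of $\alg A$ is closed under $\op{P}(\alg A)$, under $\cdot$ (from $u\le v\imp uv$ and modus ponens one gets $uv\in F$), and upward, so it contains every element of $S$.

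For part (2) I would deduce the equality from part (1) applied to $X=F\cup G$. Given a generating product $\prod_i p_i(d_i)\le c$ with $d_i\in F\cup G$, I group the factors with $d_i\in F$ into $a:=\prod p_i(d_i)$ and those with $d_i\in G$ into $b:=\prod p_i(d_i)$; since $F$ and $G$ are filters of $\alg A$, hence closed under $\op{P}(\alg A)$ and under $\cdot$, we get $a\in F$ and $b\in G$, and commutativity gives $ab\le c$. Conversely, if $ab\le c$ with $a\in F$ and $b\in G$, then $a,b\in F\join G$, so $ab\in F\join G$ and therefore $c\in F\join G$. This yields the displayed description of $F\join G$.

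The \emph{main obstacle} is the closure of $S$ under $\op{P}(\alg A)$ in part (1); everything else is bookkeeping. It dissolves precisely once the multiplicative inequality $p(u)p(v)\le p(uv)$ and the closure of $\op{P}(\alg A)$ under composition are in hand, together with the observation that integrality makes the meets with $1$ vacuous, so that the factors produced are literally $(p\circ p_i)(b_i)$ rather than $p(p_i(b_i)\meet 1)$ — the latter not being a value of any polynomial in $\op{P}(\alg A)$.
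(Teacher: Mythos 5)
The paper itself gives no proof of this lemma --- it simply cites Proposition 3.6 of \cite{Agliano1996b} --- so your argument can only be measured against what that proof must be: exhibit the displayed set as a filter of $\alg A$ containing $X$, invoke minimality, and get (2) from (1) applied to $F\cup G$. That is exactly your architecture, and most of your steps are right: the derivation of $p(u)p(v)\le p(uv)$ from normality via $v\le u\imp uv$, the observation that integrality makes the meets with $1$ vacuous, the concatenation arguments for $\meet$ and modus ponens, and the grouping argument (using commutativity) in part (2) are all correct.

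There is, however, one genuine gap: your first ``structural fact'', that \emph{every} $p\in\op{P}(\alg A)$ is normal, is only justified for polynomials in which the variable occurs once, i.e.\ compositions of slices $x\mapsto f_\lambda(a_1,\dots,x,\dots,a_n)$. The paper defines $\op{P}(\alg A)$ as the set of \emph{all} unary polynomials involving only the normal operators, and such a polynomial may use its variable several times, e.g.\ $p(x)=f_\lambda(x,x)$; this is not a composition of slices, and it need not be normal or multiplicative. Concretely, take the four-element {\L}ukasiewicz chain $\{0,1,2,3\}$ (so $x\cdot y=\max(0,x+y-3)$, $x\imp y=\min(3,3-x+y)$) and the symmetric binary operation $f$ with $f(3,y)=3$ for all $y$, $f(2,2)=2$, $f(2,y)=1$ for $y\le 1$, and $f(x,y)=0$ for $x,y\le 1$. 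One checks that every slice of $f$ is normal, so $f$ is a normal operator; but the diagonal $p(x)=f(x,x)$ has $p(2)=2$, $p(1)=0$, whence $p(2\imp 1)=p(2)=2\not\le 1=p(2)\imp p(1)$ and $p(2)\,p(2)=1\not\le 0=p(2\cdot 2)$: both normality and your multiplicative inequality fail. Consequently your key closure step, which applies that inequality to an arbitrary $p\in\op{P}(\alg A)$, breaks for such $p$. The repair is cheap and leaves your proof otherwise intact: in the closure step take $p$ to be a basic slice only (normal by hypothesis, so your computation applies verbatim), and then observe that a filter of the reduct closed under all slices is automatically closed under every member of $\op{P}(\alg A)$, since $p(a)$ is obtained by applying slices whose parameters are values of subpolynomials at $a$ (e.g.\ $f(a,a)=g(a)$ with $g(x)=f(x,a)$); the same observation is what squares your reading of ``closed under $f_\lambda$'' in the preceding Theorem with the congruence-theoretic notion of filter, and with it both parts go through.
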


Now this very general result, when applied to specific cases, can be made as complex as we want. Take for instance \cite{Paad2021}; there the author
defines a {\bf tense $\mathsf{BL}$-algebra} has a $\mathsf{BL}$-algebra with two {\em tense} unary operators. As this operators are clearly normal, the
general theory applies and many of the results in the paper are simply a straightforward consequence. I daresay that the general formulation is even clearer than the particular one (which is notationally heavy), but this is just a matter of opinion.

Another popular topic is  adding modal operators to residuated structures. There nothing wrong in that of course; for instance in \cite{Castanoetal}  the authors proposed a modal calculus with the two classical modalities $\Box$ and $\Diamond$  based on Hajek's Basic Logic. Its equivalent algebraic semantics consists of structures  $\la \alg A,\Box,\Diamond\ra$ where $\alg A$ is a $\mathsf{BL}$-algebra and $\Box, \Diamond$ are two unary operators satisfying certain equations. Filters are defined as filters (i.e. $1$-ideals) of $\alg A$ closed under $\Box$ which is a normal operator. The axiom chosen basically imply that the congruences of $\la \alg A,\Box,\Diamond\ra$  coincide with the congruences of its reduct $\la \alg A,\Box\ra$ so the filters of $\la \alg A,\Box,\Diamond\ra$ coincide with $1$-ideals and the general theory applies. Of course the same trick can be applied (to a certain extent) to every variety of $\mathsf{FL}_{ew}$-algebras; as usual one has simply to add enough axioms in such a way that the ``new'' filters are $1$-ideals (see for instance \cite{WangHeShe2019}).

\section{Changing the constant}

$\mathsf{FL}$-algebras have two constants, $0$ and $1$ and the $1$-ideals are the filters; of course one might wonder what happens if we consider the $0$-ideals. Since $\mathsf{FL}$-algebras are highly non symmetrical (unlike Boolean algebras) one might be led to believe that the situation may be different. And indeed it is: easy examples show that there are varieties of $\mathsf{FL}$-algebras that do not have normal $0$-ideals.

However there some cases in which we can apply the general theory to the $0$-ideals. In fact the variety of $\mathsf{FL}_{ew}$-algebras happens to be
$0$-subtractive witness the term $s(x,y) := (y \imp 0)x$; so in any variety of $\mathsf{FL}_{ew}$-algebras the $0$-ideals coincide with the $0$-classes of congruences. Varieties of $\mathsf{FL}_{ew}$-algebras are not $0$-ideal determined but:

\begin{lemma} Let $\alg A$ be a $\mathsf{FL_{ew}}$-algebras and let $I$ be a $0$-ideals of $\alg A$; then
$$
I^\e = \{(a,b):  (a \imp 0)\cdot b, (b \imp 0) \cdot a \in I\}.
$$
Hence any  variety of $\mathsf{FL}_{ew}$-algebras is finitely congruential w.r.t. $0$.
\end{lemma}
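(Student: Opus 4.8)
The plan is to prove the two inclusions of the asserted identity after pinning down what $I^\e$ is. Since $\mathsf{FL}_{ew}$ is subtractive (witness $s(x,y)=(y\imp 0)x$), Corollary \ref{cormain} together with the interval description preceding Theorem \ref{maincon} tells us that $\{\th\in\Con A:0/\th=I\}$ is exactly the interval $[I^\d,I^\e]$; in particular $I^\e$ is the \emph{largest} congruence whose $0$-class is $I$, and since $\th\longmapsto 0/\th$ is a complete lattice homomorphism we also have $0/I^\e=I$. Put $d_1(x,y)=(x\imp 0)y$, $d_2(x,y)=(y\imp 0)x$ and $R=\{(a,b):d_1(a,b),d_2(a,b)\in I\}$, noting that $d_i(x,x)=(x\imp 0)x=0$ holds in every $\mathsf{FL}_{ew}$-algebra. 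The goal is precisely $R=I^\e$, which is finite congruentiality witnessed by $d_1,d_2$.

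First I would dispose of $I^\e\subseteq R$, which is soft and uses nothing special about $\mathsf{FL}_{ew}$: if $(a,b)\in I^\e$ then compatibility gives $d_i(a,b)\mathrel{I^\e}d_i(b,b)=0$, so $d_i(a,b)\in 0/I^\e=I$ and hence $(a,b)\in R$. At the same time I would record that $0/R=I$, since $(a,0)\in R$ iff $(a\imp 0)\cdot 0=0\in I$ and $(0\imp 0)\cdot a=a\in I$, i.e. iff $a\in I$.

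For the reverse inclusion $R\subseteq I^\e$ I would pass to the quotient $\bar{\alg A}=\alg A/I^\e$ and use the residuation identity $(u\imp 0)\cdot v=0\iff v\le\neg\neg u$ (where $\neg x=x\imp 0$), valid in any $\mathsf{FL}_{ew}$-algebra. If $(a,b)\in R$ then $d_1(a,b),d_2(a,b)\in I=0/I^\e$, so in $\bar{\alg A}$ we get $\neg\bar a\cdot\bar b=0=\neg\bar b\cdot\bar a$, i.e. $\bar b\le\neg\neg\bar a$ and $\bar a\le\neg\neg\bar b$, which together force $\neg\neg\bar a=\neg\neg\bar b$. It therefore suffices to show that $\bar{\alg A}$ is involutive, for then $\bar a=\neg\neg\bar a=\neg\neg\bar b=\bar b$, i.e. $(a,b)\in I^\e$. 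To get involutivity I would look at the relation $\{(x,y):\neg\neg x=\neg\neg y\}$ on $\bar{\alg A}$: its $0$-class is $\{z:\neg\neg z=0\}=\{0\}$, while by the maximality of $I^\e$ and the correspondence theorem $\bar{\alg A}$ has no congruence other than the diagonal with $0$-class $\{0\}$; so as soon as this relation is known to be a congruence it must be the diagonal, which says exactly that $\neg\neg$ is injective, hence (as $\neg\neg\neg\neg=\neg\neg$) the identity.

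The main obstacle is precisely the step just invoked: that the double-negation kernel $\{(x,y):\neg\neg x=\neg\neg y\}$ is a congruence of an $\mathsf{FL}_{ew}$-algebra. Reflexivity, symmetry and transitivity are free (it is the kernel of $x\mapsto\neg\neg x$), and compatibility with $\wedge$, $\cdot$ and $\vee$ reduces to the easy identities $\neg\neg(x\wedge y)=\neg\neg x\wedge\neg\neg y$, $\neg\neg(xy)=\neg\neg(\neg\neg x\cdot\neg\neg y)$ and $\neg\neg(x\vee y)=\neg\neg(\neg\neg x\vee\neg\neg y)$, all of which follow from $\neg\neg$ being a nucleus. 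The genuinely delicate clause is compatibility with $\imp$, i.e. the Glivenko identity $\neg\neg(x\imp y)=\neg\neg x\imp\neg\neg y$: the inequality $\le$ is routine from the nucleus laws, whereas $\ge$ is where I expect to spend essentially all the effort (equivalently, this is the content of exhibiting terms $r_{i,\imp}$ as in Theorem \ref{maincon}(3)). Once this Glivenko congruence is in hand the argument closes, and finite congruentiality of every variety of $\mathsf{FL}_{ew}$-algebras follows at once because $R$ was defined by the fixed terms $d_1,d_2$.
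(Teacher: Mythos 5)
Your two soft steps ($I^\e\sse R$ and $0/R=I$) are correct, but the core of the plan fails, and earlier than you think. The clause you dismiss as easy is already false: a nucleus need not preserve binary meets, and $\neg\neg$ does not preserve them in $\mathsf{FL}_{ew}$-algebras. Concretely, let $\alg A$ be the five-element $\mathsf{FL}_{ew}$-algebra with universe $\{0,a,b,c,1\}$, where $a,b$ are incomparable, $c=a\join b$, $0<a,b<c<1$, and the product is drastic: $xy=0$ whenever $x\ne 1\ne y$ (this operation is commutative, associative and preserves joins in each argument, hence is residuated). Here $\neg a=\neg b=\neg c=c$, $\neg 0=1$, $\neg 1=0$, so $\neg\neg a\meet\neg\neg b=c$ while $\neg\neg(a\meet b)=\neg\neg 0=0$; moreover $\neg\neg(a\imp b)=\neg\neg c=c$ but $\neg\neg a\imp\neg\neg b=c\imp c=1$. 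So the kernel of $\neg\neg$ (whose classes are $\{0\},\{a,b,c\},\{1\}$) is compatible neither with $\meet$ nor with $\imp$; it is simply not a congruence, and accordingly $\alg A/I^\e$ need not be involutive. No amount of effort on the Glivenko identity can repair this.

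Worse, the same algebra shows that the inclusion $R\sse I^\e$ you were trying to prove is not merely unproved but false, i.e.\ the lemma as printed (for which the paper, note, supplies no proof at all) is incorrect with the witnesses $d_1(x,y)=(x\imp 0)y$ and $d_2(x,y)=(y\imp 0)x$. Every deductive filter of $\alg A$ containing some $u<1$ contains $u^2=0$ and hence is all of $A$; so $\alg A$ is simple, and for $I=\{0\}$ we get $I^\e=\Delta$, yet $(a,b)\in R$ since $\neg a\cdot b=c\cdot b=0=\neg b\cdot a$. The statement (and with it the concluding claim of finite congruentiality) is saved by taking instead $d_1(x,y)=(x\imp y)\imp 0$ and $d_2(x,y)=(y\imp x)\imp 0$; note that $\neg y\cdot x\le\neg(x\imp y)$, so this relation is contained in your $R$, and in the example above it is exactly $\Delta$. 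With these terms your maximality idea works verbatim: the dense elements $F=\{z:\neg z=0\}$ form a deductive filter of any $\mathsf{FL}_{ew}$-algebra (the key computation is $\neg(zw)=z\imp\neg w=z\imp 0=0$ for $z,w\in F$), the associated congruence $\{(x,y):\neg(x\imp y)=\neg(y\imp x)=0\}$ has $0$-class $\{0\}$ because $x\le\neg\neg x$, and pulling it back along $\alg A\to\alg A/I^\e$ exhibits $R'=\{(x,y):\neg(x\imp y),\neg(y\imp x)\in I\}$ as a congruence with $0$-class $I$; maximality gives $R'\sse I^\e$, and $I^\e\sse R'$ is your soft compatibility argument again.
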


So the general theory of special $T$-ideals can be applied in this case as well and (for instance) most of the results in \cite{BusneagPiciuDina2021}, \cite{Paad2017}, \cite{Paad2018} and \cite{YangSin2017}    follow from that.

\section{Tweaking the operations}

Fantasy has its limits and eventually even the most preposterous way of defining filters and ideals  over residuated lattices runs out of steam. So one can start tweaking a little bit the operations to get different structures an more complicated algebraic proofs of exactly the same results.

The first thing is to get rid of associativity of multiplication and study  {\em residuated lattice ordered groupoids}; these are very interesting structures in they own right but the filter and the ideal theories do not present much novelty. For instance in \cite{Botur2011}  {\bf non associative  {bounded} residuated lattices} are introduced: in this case the binary operation is inspired by a non associative continuous $t$-norm on $[0,1]$ and
therefore continuity forces $1$ to be the groupoid identity.
Then {\bf non associative $\mathsf{BL}$-algebras} are defined as prelinear and divisible non associative residuated lattices and
it turns out (not surprisingly) that the variety of non associative $\mathsf{BL}$-algebras is  generated as a quasivariety by all the non associative $\mathsf{BL}$-algebras induced by non associative continuos $t$-norms.
Clearly non associative residuated lattices are $1$-ideal determined and finitely congruential at $0$ so the general theory applies again.

At first I could not find any paper on special filters of such structures and I must confess I was slightly disappointed. However one should never underestimate creativity; in \cite{PaivaRivieccio} the authors investigated $t$-norms on $[0,1]$ that are not only non associative, but also non unital, in the sense
that, if $\*$ is such a $t$-norm, then $x \le x \* 1$ and the inequality may be strict. The introduction of these structures, called {\bf inflationary general residuated lattices}, is well motivated and they turn out to be
more interesting in that they are NOT ideal-determined of subtractive in general. Of course some of the general theory of ideals can be recovered in that setting as well, but it is no straightforward business and there is no need to explain it here.  We simply notice that a non associative bounded residuated lattice  is simply an inflationary general residuated lattice in which $x \* 1 \le 1$.
Now in \cite{LiangZhang2022a} the authors introduced a non commutative version of inflationary general residuated lattices with all the usual results; this is pointless enough but the authors cannot resist the temptation to study special filters \cite{LiangZhang2022b}. Of course they run into trouble, since inflationary general residuated lattices do not have a straightforward theory of filters (or ideals): their solution is to go back to non associative residuated lattices but in a covert way. Look at Theorem 1 in \cite{LiangZhang2022b}; since the statement  ``$1$ is the groupoid identity'' appears in both point (1) (implicitly) and point (2) (explicitly), this is really a statement about non associative residuated lattices. And it  really says that if one quotients out a non associative residuated lattice by a $T$-special ideal, where $T$ is any set of equations axiomatizing non associative $\mathsf{BL}$-algebras modulo non associative residuated lattices, then the result is a non associative $\mathsf{BL}$-algebra;  and, of course, this is a consequence of the general theory of $T$-special ideals. Now that the king is naked the reader can go through \cite{LiangZhang2022b} and have fun in discovering similar instances of this phenomenon.

Another generalization that is slightly different but in the same spirit is to consider a  {\em bounded q-lattice ordered residuated q-monoid} as a basis for a residuated structure. In this case we give some details that are helpful in understanding the context;  a {\bf q-lattice} is  an algebra $\la A, \join,\meet\ra$ such that
$\join$ and $\meet$ are commutative and associative and for all $a,b \in A$
\begin{enumerate}
\item $a \join (b \meet a) = a \join a = a \meet a = a \meet (b \join a)$;
\item $a \join b = a \join (b \join b)$, $a \meet b = a \meet (b \meet b)$.
\end{enumerate}
Clearly the relation $a \le b$ if $a \join a = a \join b$ is a quasiordering.
A {\bf q-monoid} is an algebra $\la A, \cdot,1\ra$ such that $\cdot$ is associative and for all $a,b \in A$
\begin{enumerate}
\item $a \cdot 1 = 1 \cdot a$;
\item $a \cdot b \cdot 1 = a \cdot b$;
\item $1 \cdot 1 = 1$.
\end{enumerate}
A {\bf quasi-$\mathsf{FL}_{w}$-algebra}  is a structure $\la \alg A,\join,\meet,\cdot,\lr,\rr,0,1\ra$ where
\begin{enumerate}
\item $\la A, \join,\meet\ra$ is a q-lattice and $0,1$ are the bottom and the top in the quasi ordering;
\item $(\lr,\cdot)$ and $(\rr,\cdot)$ are left and right residuated pairs w.r.t to $\le$;
\item $0 \meet 0 =0$ and for all $a \in A$, $a \cdot 1 = a \meet a$;
\item for all $a,b \in A$, $(b \lr a)\cdot 1 = b \lr a$ and $(a \rr b)\cdot 1 = a \rr b$.
\end{enumerate}

If $\alg A$ is a quasi-$\mathsf{FL}_w$-algebra and element  $a \in A$ is {\bf regular} if $a \cdot 1 = a$.
Let $R_\alg A$ be the set of regular elements of $\alg A$; then it is an easy exercise to check that
\begin{enumerate}
\ib $R_\alg A$ is the universe of a subalgebra $\alg R_\alg A$ of $\alg A$ that is an $\mathsf{FL}_w$-algebra;
\ib $R_\alg A = \{a \cdot 1: a \in A\}$.
\end{enumerate}
A congruence $\th \in \Con A$ is {\bf regular} if $(a\cdot 1,b \cdot 1) \in \th$ implies $(a,b) \in \th$. The proofs of the following two theorems
are straightforward:

\begin{theorem} Let $\alg A$ be a quasi-$\mathsf{FL}_w$-algebra and $\th \in \Con A$; then the following are equivalent:
\begin{enumerate}
\item $\th$ is regular;
\item $\alg A/\th$ is an $\mathsf{FL}_w$-algebra;
\item $1/\th \cap R_\alg A$ is a normal filter of $\alg R_\alg A$;
\item $1/\th = \uparrow G$ for some normal filter $G$ of $\alg R_\alg A$;
\item $\th = \{(a,b): a \lr b, b \lr a \in 1/\th\}$.
\end{enumerate}
\end{theorem}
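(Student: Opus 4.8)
The plan is to isolate a single \emph{collapse condition}
$$
(\ast)\qquad (a,\,a\cdot 1)\in\th\ \text{ for all }a\in A,
$$
show that (1), (2) and (5) are each equivalent to $(\ast)$, and then read off (3) and (4) by feeding the resulting genuine $\mathsf{FL}_w$-quotient into the normal-filter/congruence correspondence for $\alg R_\alg A$ recalled in Section 3. Throughout I would use the following elementary consequences of the q-lattice and q-monoid axioms: $a\cdot 1=a\meet a=a\join a$, so every $a$ is quasi-order equivalent to its regularization $a\cdot 1$ (both $a\le a\cdot 1$ and $a\cdot 1\le a$ hold); $x\lr x=1$ and the residuals are regular, with $x\lr y=1$ iff $y\le x$ (symmetrically for $\rr$); and $a\mapsto a\cdot 1$ is a retraction of $\alg A$ onto the $\mathsf{FL}_w$-algebra $\alg R_\alg A$.

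First I would establish (1)$\Leftrightarrow(\ast)$. For $(\ast)\Rightarrow$(1): if $(a\cdot 1,b\cdot 1)\in\th$, composing with $(a,a\cdot 1),(b,b\cdot 1)\in\th$ gives $(a,b)\in\th$ by transitivity. For (1)$\Rightarrow(\ast)$: since $a\cdot 1$ is regular, $a$ and $a\cdot 1$ have coinciding regularizations, so regularity applied to this pair yields $(a,a\cdot 1)\in\th$. Next, because $a\meet a=a\join a=a\cdot 1$, the quotient q-lattice $\alg A/\th$ is an honest lattice iff $(\ast)$ holds, and then $[1]$ is a genuine top with $[a]\cdot[1]=[a]$, so $\alg A/\th$ is an $\mathsf{FL}_w$-algebra; conversely such a quotient forces idempotency of $\meet$, i.e. $(\ast)$. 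This gives (2)$\Leftrightarrow(\ast)$. Writing $\th'$ for the relation on the right-hand side of (5), the inclusion $\th\sse\th'$ always holds since $x\lr x=1$ and the residuals are regular; the reverse inclusion says exactly that $[a]\lr[b]=[b]\lr[a]=[1]$ forces $[a]=[b]$, which by $x\lr y=1\Leftrightarrow y\le x$ is antisymmetry of the quotient quasi-order, i.e. again $(\ast)$. Testing this on the pair $a,\,a\cdot 1$ recovers $(\ast)$ from (5), closing (5)$\Leftrightarrow(\ast)$.

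It remains to weave in (3) and (4). Assuming $(\ast)$, hence an $\mathsf{FL}_w$-quotient, I would restrict $\th$ to $\alg R_\alg A$: the class $G:=1/\th\cap R_\alg A$ is the $1$-class of a congruence of the $\mathsf{FL}_w$-algebra $\alg R_\alg A$, hence a normal filter by the correspondence, giving (3). For (4) I would prove $1/\th=\uparrow G$; here $\uparrow G\sse 1/\th$ uses that $1/\th$ is upward closed (a consequence of $[1]$ being a genuine top under $(\ast)$), while $1/\th\sse\uparrow G$ follows because each $a\in 1/\th$ has $a\cdot 1\in G$ and $a\cdot 1\le a$. To close the cycle I would recover (1) from the $1$-class data: from $1/\th=\uparrow G$ the class $1/\th$ is closed under quasi-order equivalence, and I would argue that the residual formula of (5) then reconstructs $\th$ exactly, forcing $(\ast)$.

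The hard part will be precisely this last step — passing from control of the single class $1/\th$ back to the global condition $(\ast)$. In a genuine $\mathsf{FL}_w$-algebra a normal filter determines its congruence, but in the quasi setting distinct congruences can a priori share a $1$-class, so the reconstruction is not automatic; the crux is to show that the quasi-order of $\alg A/\th$ is antisymmetric (equivalently, that $[a]\lr[b]=[b]\lr[a]=[1]$ forces $[a]=[b]$) using only that $1/\th$ is the upward closure of a filter of the regular reduct. I expect the decisive ingredients to be the retraction $a\mapsto a\cdot 1$ together with regularity of the residuals, which tie an arbitrary pair $(a,b)$ to the regular pair $(a\cdot 1,b\cdot 1)$ in $\alg R_\alg A$, where the honest filter/congruence correspondence applies; making this tie-up precise is the only real content, everything else reducing to the routine verifications sketched above.
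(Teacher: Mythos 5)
The paper never actually proves this theorem --- it is one of the two results it declares ``straightforward'' and omits --- so your argument has to be judged on its own terms. Its first half is essentially correct: the reduction of (1), (2) and (5) to the collapse condition $(\ast)$ works, and your verifications go through, with one small repair needed. A lattice quotient with top $[1]$ and $[a]\cdot[1]=[a]$ is not yet an $\mathsf{FL}_w$-algebra: residuation in $\alg A/\th$ has to be checked. The clean fix is to observe that under $(\ast)$ every $\th$-class contains a regular element, so $\alg A/\th$ is a homomorphic image of the $\mathsf{FL}_w$-algebra $\alg R_\alg A$, and $\mathsf{FL}_w$-algebras form a variety. With that, (1)$\Leftrightarrow$(2)$\Leftrightarrow$(5) and the implications (1)$\Rightarrow$(3), (1)$\Rightarrow$(4) are all sound.

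The step you flag as ``the only real content'' --- recovering (1) from (3) or (4) --- is not merely hard: it is impossible, and the theorem as printed is false. Both (3) and (4) are conditions on the set $1/\th$ alone, and regularity of $\th$ is not determined by its $1$-class. In fact (3) holds for \emph{every} congruence of \emph{every} quasi-$\mathsf{FL}_w$-algebra: $\th\cap(R_\alg A\times R_\alg A)$ is a congruence of $\alg R_\alg A$, and $1/\th\cap R_\alg A$ is its $1$-class, hence automatically a normal filter by the very correspondence you invoke. So whenever $\alg A$ has a non-regular element, the identity congruence $\Delta$ satisfies (3) but is not regular: for $a\neq a\cdot 1$ one has $(a\cdot 1,(a\cdot 1)\cdot 1)\in\Delta$ but $(a,a\cdot 1)\notin\Delta$, and $\alg A/\Delta\cong\alg A$ is not an $\mathsf{FL}_w$-algebra. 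For a concrete such algebra take the two-element Boolean algebra $\alg B$ on $\{0,1\}$, add one extra element $0'$, set $f(0')=0$, $f(0)=0$, $f(1)=1$, and define every operation of $\alg A$ by $x\ast y:=f(x)\ast_{\alg B}f(y)$; one checks all the quasi-$\mathsf{FL}_w$ axioms, $R_\alg A=\{0,1\}$, and $0'$ is non-regular. Here $\Delta$ also satisfies (4): $1/\Delta=\{1\}=\uparrow\{1\}$, since the only element quasi-above the top $1$ is $1$ itself, and $\{1\}$ is a normal filter of $\alg R_\alg A\cong\alg B$; yet $\Delta$ fails (1), (2) and (5). Your suspicion that ``in the quasi setting distinct congruences can a priori share a $1$-class'' is exactly the point --- in this example $\Delta$ and $\ker f$ share the $1$-class $\{1\}$, and $\ker f$ is regular while $\Delta$ is not --- but it refutes the equivalence rather than complicating its proof. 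What survives is (1)$\Leftrightarrow$(2)$\Leftrightarrow$(5) together with (1)$\Rightarrow$(3),(4); to repair the statement, (3)/(4) must be strengthened so that $\th$ itself is recovered from the filter, e.g.\ replacing (4) by ``$1/\th=\uparrow G$ for some normal filter $G$ of $\alg R_\alg A$ \emph{and} $\th=\{(a,b):a\lr b,\ b\lr a\in\uparrow G\}$'', which is in effect how the paper's subsequent theorem sets up the bijection between normal filters and regular congruences.
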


A {\bf normal filter} of a quasi-$\mathsf{FL}_w$-algebra $\alg A$ is a subset $F \sse A$ such that
\begin{enumerate}
\ib $1 \in F$;
\ib $a \in F$ and $ a \le b$ implies $b \in F$;
\ib $a\rr b \in F$ if and only if $b\lr a \in F$.
\end{enumerate}

\begin{theorem} Let $\alg  A$ be a quasi-$\mathsf{FL}$-algebra; then
\begin{enumerate}
\item  $F \sse A$ is a normal filter if and only
if it is  equal to $1/\th$ for some regular $\th \in \Con A$;
\item $\th$ is a regular congruence of $\alg A$ if and only if
$$
\th = \{(a,b): a \lr b, b\lr a \in F\}
$$
for some normal filter $F$;
\item the regular congruences of $\alg A$ form an algebraic lattice $\op{RCon}(\alg A)$ and
the normal filters form an algebraic lattice $\op{NFil}(\alg A)$; they are isomorphic via the mapping
$$
\th \longmapsto 1/\th \qquad  F \in \th_F= \{(a,b): a \lr b, b\lr a \in F\};
$$
\item $\op{RCon}(\alg A)$ is a complete sublattice of $\Con A$;
\item  $\op{NFil}(\alg A) \cong \op{RCon}(\alg A) \cong \op{Con}(\alg R_\alg A) \cong \op{NFil}(\alg R_\alg A)$.
\end{enumerate}
\end{theorem}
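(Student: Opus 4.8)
The plan is to let the preceding theorem do the heavy lifting and to reduce everything to the genuine $\mathsf{FL}_w$-algebra $\alg R_\alg A$ of regular elements. The one genuinely new step I would prove first is a bridging lemma: $F$ is a normal filter of $\alg A$ if and only if $F = \up G$ for some normal filter $G$ of $\alg R_\alg A$, equivalently $F\cap R_\alg A \in \op{NFil}(\alg R_\alg A)$ and $F = \up(F\cap R_\alg A)$. The point that makes this work is that every $a$ is quasi-order equivalent to its regular companion $a\cdot 1 = a\meet a$ (both $a\le a\cdot1$ and $a\cdot1\le a$ follow from the q-lattice axioms and $a\cdot1 = a\meet a$), so upward closure of $F$ forces $a\cdot1\in F$ whenever $a\in F$, giving $F = \up(F\cap R_\alg A)$ at once; the reverse inclusions and the transfer of the three filter clauses between $\alg A$ and $\alg R_\alg A$ are then routine. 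This already yields $\op{NFil}(\alg A)\cong\op{NFil}(\alg R_\alg A)$ via $F\mapsto F\cap R_\alg A$.

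For part (1) I would combine the bridging lemma with clause (4) of the preceding theorem: $\th$ is regular exactly when $1/\th = \up G$ for a normal filter $G$ of $\alg R_\alg A$, so $\{1/\th : \th\in\op{RCon}(\alg A)\}$ is precisely the set of these $\up G$, which the lemma identifies with $\op{NFil}(\alg A)$. For part (2) the map $F\mapsto\th_F$ is seen to be the inverse of $\th\mapsto 1/\th$ in two strokes: clause (5) of the preceding theorem is literally $\th = \th_{1/\th}$, and a direct computation gives $1/\th_F = F$ (here $1\lr a = 1\in F$ always by integrality, while $a$ and $a\lr 1$ are quasi-order equivalent, so $a\lr 1\in F \iff a\in F$). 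Writing a normal filter as $F = 1/\th$ then gives $\th_F = \th_{1/\th} = \th$, so $\th_F$ is indeed regular and the two maps are mutually inverse. Part (3) is now the observation that both maps are plainly order preserving, so the bijection is a lattice isomorphism; its algebraicity will drop out of part (4).

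For part (4) I would avoid computing joins in $\Con A$ altogether by using the clean characterization: $\th$ is regular $\iff \eta\sse\th$, where $\eta = \{(a,b): a\cdot1 = b\cdot1\}$. Indeed, if $a\cdot1 = b\cdot1$ then $(a\cdot1,b\cdot1)\in\th$ reflexively and regularity gives $(a,b)\in\th$; conversely $(a,a\cdot1)\in\eta$ for every $a$ (since $(a\cdot1)\cdot1 = a\cdot1$) forces any $\th\supseteq\eta$ to be regular. Hence $\op{RCon}(\alg A) = \{\th\in\Con A: \la\eta\ra\sse\th\} = [\la\eta\ra,\nabla]$ is a principal filter of the algebraic lattice $\Con A$, and a principal filter of an algebraic lattice is a complete, indeed algebraic, sublattice; this settles (4) and the outstanding algebraicity in (3). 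Finally (5) requires no new work: composing the isomorphism of (1)--(3), the bridging lemma, and the classical correspondence $\op{NFil}(\alg R_\alg A)\cong\op{Con}(\alg R_\alg A)$ (valid because $\alg R_\alg A$ is an $\mathsf{FL}_w$-algebra, and $\mathsf{FL}_w$-algebras are ideal determined with the normal filters being exactly the $1$-ideals) closes the cycle $\op{NFil}(\alg A)\cong\op{RCon}(\alg A)\cong\op{Con}(\alg R_\alg A)\cong\op{NFil}(\alg R_\alg A)$.

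The real work, and the only place where the pathology of q-lattices bites, is the bridging lemma: specifically, showing that $\up G$ satisfies the normality clause $a\rr b\in\up G \iff b\lr a\in\up G$ for arbitrary, possibly non-regular, $a,b\in A$, and dually that $F\cap R_\alg A$ inherits it. This is where the two-sidedness of the residuals must be reconciled with the merely quasi-ordered lattice, using that residuals are automatically regular (axiom (4) in the definition of a quasi-$\mathsf{FL}_w$-algebra, $(b\lr a)\cdot1 = b\lr a$) so that the biconditional can be pushed down to $\alg R_\alg A$ and lifted back. Once that single lemma is in place, everything else is the bookkeeping the author rightly calls straightforward; note in particular that no step requires $\eta$ itself to be a congruence, which is what keeps the argument short.
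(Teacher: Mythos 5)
The paper itself offers no proof to compare against --- it declares both theorems of this section ``straightforward'' --- so what matters is whether your reconstruction is sound. Your architecture (reduce everything to $\alg R_\alg A$ via a bridging lemma, use clauses (4) and (5) of the preceding theorem, and describe $\op{RCon}(\alg A)$ as the interval above the congruence generated by $\eta=\{(a,b):a\cdot 1=b\cdot 1\}$) is surely the intended one, and several of your steps are correct and nicely done: the quasi-order equivalence of $a$ and $a\cdot 1$, the computation $1/\th_F=F$, and the interval argument for part (4). But there are two genuine failures. The first is the step you dismiss as ``routine'': that $F\cap R_\alg A$ is a normal filter of $\alg R_\alg A$. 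Under the paper's stated definition this is not routine but false, because a normal filter of a quasi-$\mathsf{FL}_w$-algebra is required only to contain $1$, be upward closed, and satisfy $a\rr b\in F$ iff $b\lr a\in F$; there is no closure under the monoid operation (no modus ponens, no meet closure). Any Boolean algebra is a commutative quasi-$\mathsf{FL}_w$-algebra in which the third clause is vacuous, so in the four-element Boolean algebra $\{0,x,x',1\}$ the set $\{x,x',1\}$ is a ``normal filter'' yet is not the $1$-class of any congruence (those classes are closed under $\meet$). Thus part (1) of the theorem, read literally, is false, and so is your bridging lemma in the direction $F\mapsto F\cap R_\alg A$; the repair is to add closure under $\cdot$ to the definition (as in the quasi-pseudo-$\mathsf{BL}$/$\mathsf{MV}$ papers the author cites), after which your transfer argument, including the treatment of the normality clause via regularity of residuals, does go through.

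The second gap is a logical overreach in part (1). Clause (4) of the preceding theorem characterizes which \emph{given} congruences are regular ($\th$ is regular iff $1/\th=\uparrow G$ for some $G\in\op{NFil}(\alg R_\alg A)$); it does not assert that every $\uparrow G$ is realized as $1/\th$ for \emph{some} $\th\in\Con A$. So your claim that $\{1/\th:\th\in\op{RCon}(\alg A)\}$ is ``precisely'' the set of all $\uparrow G$ has only one inclusion, and the surjectivity half --- producing a congruence from a filter, which is the real content of parts (1) and (2) --- is never proved. Your part (2) cannot supply it, since it begins by ``writing a normal filter as $F=1/\th$'', i.e.\ it presupposes exactly the missing existence statement, and nowhere do you verify that $\th_F$ is a congruence. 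The clean fix is, ironically, the fact you congratulate yourself on avoiding: show that $a\mapsto a\cdot 1$ is a homomorphism of $\alg A$ onto $\alg R_\alg A$ (this is where the q-lattice and q-monoid identities do real work), so that $\eta$ is its kernel and hence itself a congruence; then congruences of $\alg R_\alg A$ pull back to regular congruences of $\alg A$ with $1$-class $\uparrow(1/\psi)$, and parts (1), (2), (4) and (5) all follow from the correspondence theorem together with the classical filter--congruence correspondence in the $\mathsf{FL}_w$-algebra $\alg R_\alg A$.
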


It follows that the theory of normal filters in a quasi-$\mathsf{FL}_w$-algebra $\alg A$ is equivalent to the theory of normal filters (i.e. the $1$-ideals)
of its associated $\mathsf{FL}_w$-algebra $\alg R_\alg A$. At this point one can add suitable axioms to quasi-$\mathsf{FL}_w$-algebras to get subvarieties that are the quasi-replica of subvarieties of $\mathsf{FL}_w$. And for each of those the theory of normal filters is equivalent to the theory of $1$-ideals of
the corresponding subvariety of $\mathsf{FL}_w$. Examples of this are  {\bf quasi-pseudo-$\mathsf{BL}$-algebras} \cite{ChenXu2023}, {\bf quasi-pseudo-$\mathsf{MV}$-algebras} \cite{ChenDudek2018} and many others.
We also stress that we have not exhausted all the possible variations; for instance the reader can have fun in dissecting \cite{Oneretal}.

\section{Conclusions}

First we want to emphasize that there are at least two topics that we have not touched. The first one deals with algebras whose type contains a binary operation that resembles the implication and a constant $1$; the most famous (and serious) examples of algebras of this kind are $\mathsf{BCK}$ and $\mathsf{BCI}$-algebras.
Of course the poverty of the language allows the construction of many different algebraic structures with many non equivalent definitions of ``filter'' and the vast majority of them is totally pointless and utterly uninteresting. And  those that might be of some interest are those for which the filter theory corresponds to the the theory of $1$-ideals; the reader may want to look at \cite{OSV3}, Example 4.5 to understand what we mean.

The second topic we have not considered is the introduction of the so-called {\em fuzzy filters} on residuated structure; we suspect that a lot can be said in that direction as well but we did not have the stomach for it.

Finally let us give more explanations on the reasons why we have embarked in this enterprize. the main reason of course is that we believe that this is bad mathematics and should be avoided. But there are also more practical reasons, as  this way of doing mathematics gives a bad reputation to
residuated structures and, in the end, to ``contemporary'' algebraic logic. This is not an exaggeration: my original field is universal algebra and I have listened to many of my colleagues joking about it\dots  Of course one might object that most of these papers end up in fourth rate journal of worse so no harm is done; but one should not forget that those journals are indexed in Scopus for instance, so they give metrics that are commonly accepted when evaluating a researcher. How this impacts on the credibility of our field is anybody's guess (and my guess should be, at this point, clear).
In conclusion I believe that as a group we have the responsibility to police our field more effectively than  we are doing now. Other fields have found ways of doing that; maybe it is time to think seriously about it.

\providecommand{\bysame}{\leavevmode\hbox to3em{\hrulefill}\thinspace}
\providecommand{\MR}{\relax\ifhmode\unskip\space\fi MR }
\providecommand{\MRhref}[2]{%
  \href{http://www.ams.org/mathscinet-getitem?mr=#1}{#2}
}
\providecommand{\href}[2]{#2}

\end{document}